\newcommand{\yourpath}{Preambles/}
\pgfplotsset{compat=1.10}
	\declaretheorem[
	name=Theorem, 
	numberwithin=section 
	]{theorem}
	\declaretheorem[
	name=Proposition, 
	sibling=theorem 
	]{proposition}
	\declaretheorem[
	name=Lemma, 
	sibling=theorem
	]{lemma}
	\declaretheorem[style=definition, 
	name=Remark, 
	sibling=theorem
	]{remark}
	\declaretheorem[
	style=definition, 
	name=Definition,
	sibling=theorem
	]{definition}
	\declaretheorem[
	style=definition, 
	name=Example
	]{example}
	\newcommand{\iffdef}{\stackrel{def}{\iff}} 
	\newcommand{\Xcolour}{Red}
	\newcommand{\Zcolour}{YellowGreen}
	\newcommand{\Xaltcolour}{Purple}
	\newcommand{\Zaltcolour}{Cyan}
	\newcommand{\Dcolour}{black}
	\newcommand{\Zbwcolour}{white}
	\newcommand{\hbox{\input{\yourpath/Submodules/symbols/mapSym.tex}}\!\!}{\hbox{\input{\yourpath/Submodules/symbols/mapSym.tex}}\!\!} 
	\newcommand{\hbox{\input{\yourpath/Submodules/symbols/mapconjSym.tex}}\!\!}{\hbox{\input{\yourpath/Submodules/symbols/mapconjSym.tex}}\!\!} 
	\tikzset{->-/.style={decoration={markings,mark=at position #1 with {\arrow{>}}},postaction={decorate}}}
	\tikzset{-<-/.style={decoration={markings,mark=at position #1 with {\arrow{<}}},postaction={decorate}}}
\newcommand{\setdefinition}[1]{\ensuremath{#1}\xspace}
	\newcommand{\naturals}{\setdefinition{\mathbb{N}}} 
	\newcommand{\integers}{\setdefinition{\mathbb{Z}}} 
	\newcommand{\rationals}{\setdefinition{\mathbb{Q}}} 
	\newcommand{\reals}{\setdefinition{\mathbb{R}}} 
	\newcommand{\complexs}{\setdefinition{\mathbb{C}}} 
	\newcommand{\Powerset}[1]{\mathcal{P}(#1)} 
	\newcommand{\suchthat}[2]{\left\{#1 \: \colon \: #2\right\}} 
	\newcommand{\starNaturals}{\setdefinition{\nonstd{\naturals}}} 
	\newcommand{\starComplexs}{\setdefinition{\nonstd{\complexs}}} 
	\newcommand{\starReals}{\setdefinition{\nonstd{\reals}}} 
	\newcommand{\Infinitesimals}{\mathbb{I}} 
	\newcommand{\Limited}{\mathbb{L}} 
	\newcommand{\nonstd}[1]{{^\star\!#1}} 
	\newcommand{\near}{\sim} 
	\newcommand{\infnear}{\simeq} 
	\newcommand{\Halo}[1]{\operatorname{Hal}(#1)} 
	\newcommand{\Galaxy}[1]{\operatorname{Gal}(#1)} 
	\newcommand{\Shadow}[1]{\operatorname{Sh}(#1)} 
\newcommand{\spacedefinition}[1]{\ensuremath{#1}\xspace}
\newcommand{\spacename}[1]{\spacedefinition{\mathcal{#1}}}
	\newcommand{\SpaceH}{\spacename{H}}
	\newcommand{\ket}[1]{\vert #1 \rangle} 
	\newcommand{\bra}[1]{\langle #1 \vert} 
	\newcommand{\Dim}[1]{\operatorname{dim}#1} 
	\newcommand{\LtwoSym}{\operatorname{L}^2} 
	\newcommand{\Ltwo}[1]{\spacedefinition{\LtwoSym[#1]}} 
\tikzstyle{env}=[copoint,regular polygon rotate=0,minimum width=0.2cm, fill=black]
\tikzstyle{probs}=[shape=semicircle,fill=white,draw=black,shape border rotate=180,minimum width=1.2cm]
\tikzstyle{every picture}=[baseline=-0.25em,scale=0.5]
\tikzstyle{dotpic}=[] 
\tikzstyle{diredges}=[every to/.style={diredge}]
\tikzstyle{math matrix}=[matrix of math nodes,left delimiter=(,right delimiter=),inner sep=2pt,column sep=1em,row sep=0.5em,nodes={inner sep=0pt},text height=1.5ex, text depth=0.25ex]
\tikzstyle{inline text}=[text height=1.5ex, text depth=0.25ex,yshift=0.5mm]
\tikzstyle{label}=[font=\footnotesize,text height=1.5ex, text depth=0.25ex,yshift=0.5mm]
\tikzstyle{left label}=[label,anchor=east,xshift=1.5mm]
\tikzstyle{right label}=[label,anchor=west,xshift=-1.5mm]
\tikzstyle{braceedge}=[decorate,decoration={brace,amplitude=2mm,raise=-1mm}]
\tikzstyle{small braceedge}=[decorate,decoration={brace,amplitude=1mm,raise=-1mm}]
\tikzstyle{doubled}=[line width=1.6pt] 
\tikzstyle{boldedge}=[doubled,shorten <=-0.17mm,shorten >=-0.17mm]
\tikzstyle{boldedgegray}=[doubled,gray,shorten <=-0.17mm,shorten >=-0.17mm]
\tikzstyle{semidoubled}=[line width=1.4pt] 
\tikzstyle{semiboldedgegray}=[semidoubled,gray,shorten <=-0.17mm,shorten >=-0.17mm]
\tikzstyle{boldedgedashed}=[very thick,dashed,shorten <=-0.17mm,shorten >=-0.17mm]
\tikzstyle{vboldedgedashed}=[doubled,dashed,shorten <=-0.17mm,shorten >=-0.17mm]
\tikzstyle{left hook arrow}=[left hook-latex]
\tikzstyle{right hook arrow}=[right hook-latex]
\tikzstyle{sembracket}=[line width=0.5pt,shorten <=-0.07mm,shorten >=-0.07mm]
\tikzstyle{causal edge}=[->,thick,gray]
\tikzstyle{causal nondir}=[thick,gray]
\tikzstyle{timeline}=[thick,gray, dashed]
\tikzstyle{cedge}=[<->,thick,gray!70!white]
\tikzstyle{empty diagram}=[draw=gray!40!white,dashed,shape=rectangle,minimum width=1cm,minimum height=1cm]
\tikzstyle{empty diagram small}=[draw=gray!50!white,dashed,shape=rectangle,minimum width=0.6cm,minimum height=0.5cm]
\tikzstyle{dot}=[inner sep=0mm,minimum width=3mm,minimum height=3mm,draw,shape=circle,text depth=-0.1mm]
\tikzstyle{ddot}=[inner sep=0mm, doubled, minimum width=3.5mm,minimum height=3.5mm,draw,shape=circle]
\tikzstyle{black dot}=[dot,fill=black]
\tikzstyle{white dot}=[dot,fill=white,,text depth=-0.2mm]
\tikzstyle{green dot}=[white dot] 
\tikzstyle{gray dot}=[dot,fill=gray!40!white,,text depth=-0.2mm]
\tikzstyle{red dot}=[gray dot] 
\tikzstyle{black ddot}=[ddot,fill=black]
\tikzstyle{white ddot}=[ddot,fill=white]
\tikzstyle{gray ddot}=[ddot,fill=gray!40!white]
\tikzstyle{gray edge}=[gray!40!white]
\tikzstyle{small dot}=[inner sep=0.5mm,minimum width=0pt,minimum height=0pt,draw,shape=circle]
\tikzstyle{small black dot}=[small dot,fill=black]
\tikzstyle{small white dot}=[small dot,fill=white]
\tikzstyle{small gray dot}=[small dot,fill=gray!40!white]
\tikzstyle{causal dot}=[inner sep=0.4mm,minimum width=0pt,minimum height=0pt,draw=white,shape=circle,fill=gray!40!white]
\tikzstyle{phase dimensions}=[minimum size=5mm,font=\footnotesize,rectangle,rounded corners=2.5mm,inner sep=0.2mm,outer sep=-2mm,text height=1ex, text depth=0.25ex, yshift=0.5mm]
\tikzstyle{dphase dimensions}=[phase dimensions]
\tikzstyle{phase dot}=[dot,phase dimensions]
\tikzstyle{white phase dot}=[dot,fill=white,phase dimensions]
\tikzstyle{white phase ddot}=[ddot,fill=white,dphase dimensions]
\tikzstyle{white rect ddot}=[draw=black,fill=white,doubled,minimum size=5mm,font=\footnotesize,rectangle,rounded corners=2.5mm,inner sep=0.2mm]
\tikzstyle{gray rect ddot}=[draw=black,fill=gray!40!white,doubled,minimum size=6mm,font=\footnotesize,rectangle,rounded corners=3mm]
\tikzstyle{gray phase dot}=[dot,fill=gray!40!white,phase dimensions]
\tikzstyle{gray phase ddot}=[ddot,fill=gray!40!white,dphase dimensions]
\tikzstyle{grey phase dot}=[gray phase dot]
\tikzstyle{grey phase ddot}=[gray phase ddot]
\tikzstyle{cnot}=[fill=white,shape=circle,inner sep=-1.4pt]
\tikzstyle{hadamard}=[square box,inner sep=0 pt,font=\footnotesize,minimum height=4mm,minimum width=4mm]
\tikzstyle{dhadamard}=[hadamard,doubled]
\tikzstyle{antipode}=[white dot,inner sep=0.3mm,font=\footnotesize]
\tikzstyle{scalar}=[diamond,draw,inner sep=0.5pt,font=\small]
\tikzstyle{dscalar}=[diamond,doubled, draw,inner sep=0.5pt,font=\small]
\tikzstyle{small box}=[rectangle,inline text,fill=white,draw,minimum height=5mm,yshift=-0.5mm,minimum width=5mm,font=\small]
\tikzstyle{small gray box}=[small box,fill=gray!30]
\tikzstyle{medium box}=[rectangle,inline text,fill=white,draw,minimum height=5mm,yshift=-0.5mm,minimum width=10mm,font=\small]
\tikzstyle{square box}=[small box] 
\tikzstyle{medium gray box}=[small box,fill=gray!30]
\tikzstyle{semilarge box}=[rectangle,inline text,fill=white,draw,minimum height=5mm,yshift=-0.5mm,minimum width=12.5mm,font=\small]
\tikzstyle{large box}=[rectangle,inline text,fill=white,draw,minimum height=5mm,yshift=-0.5mm,minimum width=15mm,font=\small]
\tikzstyle{large gray box}=[small box,fill=gray!30]
\tikzstyle{gray square point}=[small box,fill=gray!50]
\tikzstyle{dphase box white}=[dbox]
\tikzstyle{dphase box gray}=[dbox,fill=gray!50!white]
\tikzstyle{point}=[regular polygon,regular polygon sides=3,draw,scale=0.75,inner sep=-0.5pt,minimum width=9mm,fill=white,regular polygon rotate=180]
\tikzstyle{copoint}=[regular polygon,regular polygon sides=3,draw,scale=0.75,inner sep=-0.5pt,minimum width=9mm,fill=white]
\tikzstyle{dpoint}=[point,doubled]
\tikzstyle{dcopoint}=[copoint,doubled]
\tikzstyle{wide copoint}=[fill=white,draw,shape=isosceles triangle,shape border rotate=90,isosceles triangle stretches=true,inner sep=0pt,minimum width=1.5cm,minimum height=6.12mm]
\tikzstyle{wide point}=[fill=white,draw,shape=isosceles triangle,shape border rotate=-90,isosceles triangle stretches=true,inner sep=0pt,minimum width=1.5cm,minimum height=6.12mm,yshift=-0.0mm]
\tikzstyle{wide point plus}=[fill=white,draw,shape=isosceles triangle,shape border rotate=-90,isosceles triangle stretches=true,inner sep=0pt,minimum width=1.74cm,minimum height=7mm,yshift=-0.0mm]
\tikzstyle{wide dpoint}=[fill=white,doubled,draw,shape=isosceles triangle,shape border rotate=-90,isosceles triangle stretches=true,inner sep=0pt,minimum width=1.5cm,minimum height=6.12mm,yshift=-0.0mm]
\tikzstyle{tinypoint}=[regular polygon,regular polygon sides=3,draw,scale=0.55,inner sep=-0.15pt,minimum width=6mm,fill=white,regular polygon rotate=180] 
\tikzstyle{white point}=[point]
\tikzstyle{white dpoint}=[dpoint]
\tikzstyle{green point}=[white point] 
\tikzstyle{white copoint}=[copoint]
\tikzstyle{gray point}=[point,fill=gray!40!white]
\tikzstyle{gray dpoint}=[gray point,doubled]
\tikzstyle{red point}=[gray point] 
\tikzstyle{gray copoint}=[copoint,fill=gray!40!white]
\tikzstyle{gray dcopoint}=[gray copoint,doubled]
\tikzstyle{black point}=[point,fill=black]
\tikzstyle{black copoint}=[copoint,fill=black]
\tikzstyle{tiny gray point}=[tinypoint,fill=gray!40!white]
\tikzstyle{diredge}=[->]
\tikzstyle{rdiredge}=[<-]
\tikzstyle{thickdiredge}=[->, very thick]
\tikzstyle{pointer edge}=[->,very thick,gray]
\tikzstyle{pointer edge part}=[very thick,gray]
\tikzstyle{dashed edge}=[dashed]
\tikzstyle{thick dashed edge}=[very thick,dashed]
\tikzstyle{thick gray dashed edge}=[thick dashed edge,gray!40]
\tikzstyle{thick map edge}=[very thick,|->]
\newcommand{\boxshape}[3]{%
\pgfdeclareshape{#1}{
\inheritsavedanchors[from=rectangle] 
\inheritanchorborder[from=rectangle]
\inheritanchor[from=rectangle]{center}
\inheritanchor[from=rectangle]{north}
\inheritanchor[from=rectangle]{south}
\inheritanchor[from=rectangle]{west}
\inheritanchor[from=rectangle]{east}
\backgroundpath{
\southwest \pgf@xa=\pgf@x \pgf@ya=\pgf@y
\northeast \pgf@xb=\pgf@x \pgf@yb=\pgf@y

\@tempdima=#2
\@tempdimb=#3

\pgfpathmoveto{\pgfpoint{\pgf@xa - 5pt + \@tempdima}{\pgf@ya}}
\pgfpathlineto{\pgfpoint{\pgf@xa - 5pt - \@tempdima}{\pgf@yb}}
\pgfpathlineto{\pgfpoint{\pgf@xb + 5pt + \@tempdimb}{\pgf@yb}}
\pgfpathlineto{\pgfpoint{\pgf@xb + 5pt - \@tempdimb}{\pgf@ya}}
\pgfpathlineto{\pgfpoint{\pgf@xa - 5pt + \@tempdima}{\pgf@ya}}
\pgfpathclose
}
}}
\tikzstyle{cloud}=[shape=cloud,draw,minimum width=1.5cm,minimum height=1.5cm]
\tikzstyle{map}=[draw,shape=NEbox,inner sep=2pt,minimum height=6mm,fill=white]
\tikzstyle{dashedmap}=[draw,dashed,shape=NEbox,inner sep=2pt,minimum height=6mm,fill=white]
\tikzstyle{mapdag}=[draw,shape=SEbox,inner sep=2pt,minimum height=6mm,fill=white]
\tikzstyle{mapadj}=[draw,shape=SEbox,inner sep=2pt,minimum height=6mm,fill=white]
\tikzstyle{maptrans}=[draw,shape=SWbox,inner sep=2pt,minimum height=6mm,fill=white]
\tikzstyle{mapconj}=[draw,shape=NWbox,inner sep=2pt,minimum height=6mm,fill=white]
\tikzstyle{medium map}=[draw,shape=NEbox,inner sep=2pt,minimum height=6mm,fill=white,minimum width=7mm]
\tikzstyle{medium map dag}=[draw,shape=SEbox,inner sep=2pt,minimum height=6mm,fill=white,minimum width=7mm]
\tikzstyle{medium map adj}=[draw,shape=SEbox,inner sep=2pt,minimum height=6mm,fill=white,minimum width=7mm]
\tikzstyle{medium map trans}=[draw,shape=SWbox,inner sep=2pt,minimum height=6mm,fill=white,minimum width=7mm]
\tikzstyle{medium map conj}=[draw,shape=NWbox,inner sep=2pt,minimum height=6mm,fill=white,minimum width=7mm]
\tikzstyle{semilarge map}=[draw,shape=NEbox,inner sep=2pt,minimum height=6mm,fill=white,minimum width=9.5mm]
\tikzstyle{semilarge map trans}=[draw,shape=SWbox,inner sep=2pt,minimum height=6mm,fill=white,minimum width=9.5mm]
\tikzstyle{semilarge map adj}=[draw,shape=SEbox,inner sep=2pt,minimum height=6mm,fill=white,minimum width=9.5mm]
\tikzstyle{semilarge map dag}=[draw,shape=SEbox,inner sep=2pt,minimum height=6mm,fill=white,minimum width=9.5mm]
\tikzstyle{semilarge map conj}=[draw,shape=NWbox,inner sep=2pt,minimum height=6mm,fill=white,minimum width=9.5mm]
\tikzstyle{large map}=[draw,shape=NEbox,inner sep=2pt,minimum height=6mm,fill=white,minimum width=12mm]
\tikzstyle{very large map}=[draw,shape=NEbox,inner sep=2pt,minimum height=6mm,fill=white,minimum width=17mm]
\tikzstyle{medium dmap}=[draw,doubled,shape=NEbox,inner sep=2pt,minimum height=6mm,fill=white,minimum width=7mm]
\tikzstyle{medium dmap dag}=[draw,doubled,shape=SEbox,inner sep=2pt,minimum height=6mm,fill=white,minimum width=7mm]
\tikzstyle{medium dmap adj}=[draw,doubled,shape=SEbox,inner sep=2pt,minimum height=6mm,fill=white,minimum width=7mm]
\tikzstyle{medium dmap trans}=[draw,doubled,shape=SWbox,inner sep=2pt,minimum height=6mm,fill=white,minimum width=7mm]
\tikzstyle{medium dmap conj}=[draw,doubled,shape=NWbox,inner sep=2pt,minimum height=6mm,fill=white,minimum width=7mm]
\tikzstyle{semilarge dmap}=[draw,doubled,shape=NEbox,inner sep=2pt,minimum height=6mm,fill=white,minimum width=9.5mm]
\tikzstyle{semilarge dmap trans}=[draw,doubled,shape=SWbox,inner sep=2pt,minimum height=6mm,fill=white,minimum width=9.5mm]
\tikzstyle{semilarge dmap adj}=[draw,doubled,shape=SEbox,inner sep=2pt,minimum height=6mm,fill=white,minimum width=9.5mm]
\tikzstyle{semilarge dmap dag}=[draw,doubled,shape=SEbox,inner sep=2pt,minimum height=6mm,fill=white,minimum width=9.5mm]
\tikzstyle{semilarge dmap conj}=[draw,doubled,shape=NWbox,inner sep=2pt,minimum height=6mm,fill=white,minimum width=9.5mm]
\tikzstyle{large dmap}=[draw,doubled,shape=NEbox,inner sep=2pt,minimum height=6mm,fill=white,minimum width=12mm]
\tikzstyle{large dmap conj}=[draw,doubled,shape=NWbox,inner sep=2pt,minimum height=6mm,fill=white,minimum width=12mm]
\tikzstyle{large dmap trans}=[draw,doubled,shape=SWbox,inner sep=2pt,minimum height=6mm,fill=white,minimum width=12mm]
\tikzstyle{very large dmap}=[draw,doubled,shape=NEbox,inner sep=2pt,minimum height=6mm,fill=white,minimum width=19.5mm]
\tikzstyle{muxbox}=[draw,shape=rectangle,minimum height=3mm,minimum width=3mm,fill=white]
\tikzstyle{dmuxbox}=[muxbox,doubled]
\tikzstyle{box}=[draw,shape=rectangle,inner sep=2pt,minimum height=6mm,minimum width=6mm,fill=white]
\tikzstyle{dbox}=[draw,doubled,shape=rectangle,inner sep=2pt,minimum height=6mm,minimum width=6mm,fill=white]
\tikzstyle{dmap}=[draw,doubled,shape=NEbox,inner sep=2pt,minimum height=6mm,fill=white]
\tikzstyle{dmapdag}=[draw,doubled,shape=SEbox,inner sep=2pt,minimum height=6mm,fill=white]
\tikzstyle{dmapadj}=[draw,doubled,shape=SEbox,inner sep=2pt,minimum height=6mm,fill=white]
\tikzstyle{dmaptrans}=[draw,doubled,shape=SWbox,inner sep=2pt,minimum height=6mm,fill=white]
\tikzstyle{dmapconj}=[draw,doubled,shape=NWbox,inner sep=2pt,minimum height=6mm,fill=white]
\tikzstyle{ddmap}=[draw,doubled,dashed,shape=NEbox,inner sep=2pt,minimum height=6mm,fill=white]
\tikzstyle{ddmapdag}=[draw,doubled,dashed,shape=SEbox,inner sep=2pt,minimum height=6mm,fill=white]
\tikzstyle{ddmapadj}=[draw,doubled,dashed,shape=SEbox,inner sep=2pt,minimum height=6mm,fill=white]
\tikzstyle{ddmaptrans}=[draw,doubled,dashed,shape=SWbox,inner sep=2pt,minimum height=6mm,fill=white]
\tikzstyle{ddmapconj}=[draw,doubled,dashed,shape=NWbox,inner sep=2pt,minimum height=6mm,fill=white]
\tikzstyle{smap}=[draw,shape=sNEbox,fill=white]
\tikzstyle{smapdag}=[draw,shape=sSEbox,fill=white]
\tikzstyle{smapadj}=[draw,shape=sSEbox,fill=white]
\tikzstyle{smaptrans}=[draw,shape=sSWbox,fill=white]
\tikzstyle{smapconj}=[draw,shape=sNWbox,fill=white]
\tikzstyle{dsmap}=[draw,dashed,shape=sNEbox,fill=white]
\tikzstyle{dsmapdag}=[draw,dashed,shape=sSEbox,fill=white]
\tikzstyle{dsmaptrans}=[draw,dashed,shape=sSWbox,fill=white]
\tikzstyle{dsmapconj}=[draw,dashed,shape=sNWbox,fill=white]
\tikzstyle{mmap}=[draw,shape=mNEbox]
\tikzstyle{mmapdag}=[draw,shape=mSEbox]
\tikzstyle{mmaptrans}=[draw,shape=mSWbox]
\tikzstyle{mmapconj}=[draw,shape=mNWbox]
\tikzstyle{mmapgray}=[draw,fill=gray!40!white,shape=mNEbox]
\tikzstyle{smapgray}=[draw,fill=gray!40!white,shape=sNEbox]
\pgfmathsetmacro{\pgf@shorten@left}{\pgfkeysvalueof{/tikz/shorten left}}
\pgfmathsetmacro{\pgf@shorten@right}{\pgfkeysvalueof{/tikz/shorten right}}
\pgfmathsetmacro{\pgf@shorten@left}{\pgfkeysvalueof{/tikz/shorten left}}
\pgfmathsetmacro{\pgf@shorten@right}{\pgfkeysvalueof{/tikz/shorten right}}
\tikzstyle{kpoint common}=[draw,fill=white,inner sep=1pt,minimum height=3mm]
\tikzstyle{kpoint}=[shape=cornerpoint,shorten left=5pt,kpoint common]
\tikzstyle{kpoint adjoint}=[shape=cornercopoint,shorten left=5pt,kpoint common]
\tikzstyle{kpoint conjugate}=[shape=cornerpoint,shorten right=5pt,kpoint common]
\tikzstyle{kpoint transpose}=[shape=cornercopoint,shorten right=5pt,kpoint common]
\tikzstyle{kpoint symm}=[shape=cornerpoint,shorten left=5pt,shorten right=5pt,kpoint common]
\tikzstyle{black kpoint}=[shape=cornerpoint,shorten left=5pt,kpoint common,fill=black]
\tikzstyle{black kpoint adjoint}=[shape=cornercopoint,shorten left=5pt,kpoint common,fill=black]
\tikzstyle{kpointdag}=[kpoint adjoint]
\tikzstyle{kpointadj}=[kpoint adjoint]
\tikzstyle{kpointconj}=[kpoint conjugate]
\tikzstyle{kpointtrans}=[kpoint transpose]
\tikzstyle{big kpoint}=[kpoint, minimum width=1.2 cm, minimum height=8mm, inner sep=4pt, text depth=3mm]
\tikzstyle{wide kpoint}=[kpoint, minimum width=1 cm, inner sep=2pt, text depth=-0.7 mm]
\tikzstyle{wide kpointdag}=[kpointdag, minimum width=1 cm, inner sep=2pt, text depth=0.7 mm]
\tikzstyle{wide kpointconj}=[kpointconj, minimum width=1 cm, inner sep=2pt, text depth=-0.7 mm]
\tikzstyle{wide kpointtrans}=[kpointtrans, minimum width=1 cm, inner sep=2pt, text depth=0.7 mm]
\tikzstyle{gray kpoint}=[kpoint,fill=gray!50!white]
\tikzstyle{gray kpointdag}=[kpointdag,fill=gray!50!white]
\tikzstyle{gray kpointadj}=[kpointadj,fill=gray!50!white]
\tikzstyle{gray kpointconj}=[kpointconj,fill=gray!50!white]
\tikzstyle{gray kpointtrans}=[kpointtrans,fill=gray!50!white]
\tikzstyle{gray dkpoint}=[kpoint,fill=gray!50!white,doubled]
\tikzstyle{gray dkpointdag}=[kpointdag,fill=gray!50!white,doubled]
\tikzstyle{gray dkpointadj}=[kpointadj,fill=gray!50!white,doubled]
\tikzstyle{gray dkpointconj}=[kpointconj,fill=gray!50!white,doubled]
\tikzstyle{gray dkpointtrans}=[kpointtrans,fill=gray!50!white,doubled]
\tikzstyle{white label}=[draw,fill=white,rectangle,inner sep=0.7 mm]
\tikzstyle{gray label}=[draw,fill=gray!50!white,rectangle,inner sep=0.7 mm]
\tikzstyle{black label}=[draw,fill=black,rectangle,inner sep=0.7 mm]
\tikzstyle{dkpoint}=[kpoint,doubled]
\tikzstyle{wide dkpoint}=[wide kpoint,doubled]
\tikzstyle{dkpointdag}=[kpoint adjoint,doubled]
\tikzstyle{dkcopoint}=[kpoint adjoint,doubled]
\tikzstyle{dkpointadj}=[kpoint adjoint,doubled]
\tikzstyle{dkpointconj}=[kpoint conjugate,doubled]
\tikzstyle{dkpointtrans}=[kpoint transpose,doubled]
\tikzstyle{kscalar}=[kpoint common, shape=EBox, inner xsep=-1pt, inner ysep=3pt,font=\small]
\tikzstyle{kscalarconj}=[kpoint common, shape=WBox, inner xsep=-1pt, inner ysep=3pt,font=\small]
 \tikzstyle{upground}=[circuit ee IEC,thick,ground,rotate=90,scale=2.5]
 \tikzstyle{downground}=[circuit ee IEC,thick,ground,rotate=-90,scale=2.5]
 \tikzstyle{bigground}=[regular polygon,regular polygon sides=3,draw=gray,scale=0.50,inner sep=-0.5pt,minimum width=10mm,fill=gray]
\tikzstyle{arrs}=[-latex,font=\small,auto]
\tikzstyle{arrow plain}=[arrs]
\tikzstyle{arrow dashed}=[dashed,arrs]
\tikzstyle{arrow bold}=[very thick,arrs]
\tikzstyle{arrow hide}=[draw=white!0,-]
\tikzstyle{arrow reverse}=[latex-]
\tikzstyle{cdnode}=[]
\title{The way of the Infinitesimal}
\author{
	Fabrizio Genovese
	\institute{Quantum Group \\ University of Oxford}
	\email{fabrizio.genovese@hertford.ox.ac.uk}
}
\begin{document}

\bibliographystyle{eptcs}
\maketitle

\begin{abstract}
In this document we provide a brief introduction to Non-Standard Analysis, with Categorical Quantum Mechanics in mind as application. We are convinced this document will be helpful for everyone wanting to learn how to practically use Non-Standard Analysis without having to go too deep into the logical and model-theoretical complications of the subject. Our goal is to give the reader the necessary confidence to manipulate infinities, infinitesimals and the standard constructions, to employ Non-Standard Analysis as an everyday tool in scientific research.

This document is heavily based on ``Lectures on the Hyperreals'' by Robert Goldblatt, that has to be credited for having given one of the simplest and friendliest explanations of the subject so far.
\end{abstract}

\section{Introduction} 
When we think about limits, derivatives and other constructions commonly employed in calculus we often do it in intuitive terms. We manipulate ``$dx$'', ``$dy$'' and the like as they were algebraic objects, with ease. Nevertheless, when it comes to formalize these entities mathematically the definition usually employed is the one \emph{\`a la Cauchy}: derivatives and limits become analytic constructions relying on the mantra ``the more you go further, the more this quantity will get near to this other one''. As an instance, take the usual undergrad textbook definition of limit for a succession, originally given by Weierstrass:
\begin{equation*}
	\lim_{n \to \infty} s_n = l \iffdef \forall \epsilon > 0, \, \exists n_0: \, \forall n \geq n_0, \, |l - s_n| \leq \epsilon
\end{equation*}
We all know what this definition tries to capture: It does not matter how small $\epsilon$ is, for every $\epsilon$ you pick at some point the distance between $s_n$ and $l$ will consistently be less than $\epsilon$, and hence we can say that $s_n$ gets ``closer and closer'' to $l$. We would be tempted to say, then, that $s_n$ becomes \emph{infinitesimally close to $l$} but we can't, since we have no notion of what ``infinitesimal'' really means: This concept has to be relegated to our intuition. In practice, this means that one has to deal with a lot of ugly inequalities, moving out from the comfortable and reassuring world of pure algebra (I'm pretty sure this is the main reason why people that hate calculus hate calculus).

Definitions such the one above work nicely for a plethora of applications, and this is clearly proved by the fact that calculus is probably the most popular tool in the history of Science. Nevertheless, not having any way to explicitly talk about infinities and infinitesimals becomes a problem when, say, one has to work with \emph{compositional features} of a model. 

To better explain this, we consider an example from Categorical Quantum Mechanics (abbreviated CQM). In this framework there are some very important tools that are called $\emph{Frobenius algebras}$. It is not important at this point to know what they do or why they are useful, what is important is that, in a Hilbert space \SpaceH, to have Frobenius algebras the sum $\sum_1^n \bra{e_i}$ has to be part of \SpaceH, where $n$ is $\Dim{\SpaceH$} and $\ket{e_i}$ are vectors of an orthonormal basis. Obviously using the standard definition of sum and limits this object does not belong to \SpaceH unless its dimension is finite (since that sum is divergent), and because of this CQM has been essentially relegated to the study of finite-dimensional systems for a long time. Similarly, in physics one manipulates all the time objects such as Dirac deltas and plain waves on spaces like \Ltwo{\reals} or \Ltwo{\complexs}. Again, it doesn't matter to know what Dirac deltas and plain waves are: The problem here is that they are not elements of \Ltwo{\reals} or \Ltwo{\complexs}, and this poses serious limitations when one wants to capture the compositional behaviour of physical systems using category theory.

The solution to this problem comes from Non-Standard Analysis, and our plan is more or less this: We want to extend the real numbers (or the complex numbers, or $\Ltwo{\reals}$, whatever) including infinities and infinitesimals. In this way entities like the diverging sum above become well-defined algebraic objects with their own dignity, and we can formalize our constructions (as in the CQM case) on these extended versions of our spaces. This construction will be explained in detail in the next section. This is especially appealing from the categorical point of view since, being everything well-defined, we can study and formalize the compositional features of our systems exactly as we would do in the finite-dimensional case.

This is clearly not enough: We want to be able to control our extension. This means that we want clear and sharp answers to questions like ``If $T$ is a theorem in some environment, do I know when it is also a theorem in the environment extension, and vice-versa?'' Satisfaction of this requirement allows us to go back and forth from the extension to the original environment, and is ultimately what characterizes our extension construction as meaningful, making us able to regard it as ``my original environment plus algebraic entities that represent infinities and infinitesimals, entities that I can manipulate exactly as my intuition would suggest''. The piece of machinery that puts us in control, attaining exactly the desiderata above, is called \emph{transfer theorem}, and it will be explained in detail in section~\ref{sec:transfer theorem}. 

In the last section we will put all this machinery into good use. We will take a lot of textbook examples \`a la Cauchy and we will re-express them ``\`a la Robinson'', that is, using non-standard formalism. This will hopefully give the practicing scientist in need of an algebraic formalization of calculus enough confidence to wander in the beautiful world of Non-Standard Analysis as he pleases.

As already mentioned in the abstract, we owe a lot to Robinson, the father of Non-Standard Analysis itself, and to Goldblatt, that was able to explain the subject in a very simple and intuitive way. In particular, this document can be considered a recap of Goldblatt's book ``Lectures on the Hyperreals''~\cite{Goldblatt1998}, to which we redirect the reader that wants to know more.

The original book by Robinson~\cite{Robinson1996} is, perhaps, still the Bible of Non-Standard Analysis: Countless non-standard formalisations of interesting mathematical structures can be found there, from Hilbert Spaces to Differential Geometry, and we redirect the reader to that beautiful world of wonders in case more in-dept information is needed.
\section{The construction of $\starReals$}\label{sec:construction of starreals}
In logic there is a very famous theorem called \emph{compactness theorem}. What it says goes more or less like this: If $\Sigma$ is a set of statements such that every finite subset $\Sigma'$ of $\Sigma$ has a model (that is, some mathematical structure in which all the statements in $\Sigma'$ are true), then $\Sigma$ has a model itself. Now, suppose that $\Sigma$ is the set of all statements that are true for real numbers, plus the statements
\begin{equation*}
	0 < \epsilon,\quad \epsilon < 1,\quad \epsilon < \frac{1}{2}, \quad \epsilon < \frac{1}{3}, \quad, \dots
\end{equation*}
Using the compactness theorem we can infer that $\Sigma$ has a model, call it $\starReals$, that will be an ordered field in which the element $\epsilon$ is an infinitesimal, that is, something greater than zero but smaller than any real number. We also get that statements in $\starReals$ ``formulated in the right way'' will hold if and only if they hold in $\reals$. This last sentence has been intentionally expressed in a murky way, and one of the objectives of this document is to understand what it precisely means.

Albeit the compactness theorem is what makes non-standard analysis ultimately work, extending the real numbers using this approach has a great disadvantage: We get a model of the reals containing infinities and infinitesimals, but in a non-constructive way. What we will do, instead, is to \emph{constructively build} the extended real numbers $\starReals$, using mathematical tools called \emph{ultraproducts}. The experienced logician will be aware that, in practice, what we will do amounts to re-proving the compactness theorem in a more specialized setting that suits our need. But in order to proceed with this construction, we have to figure out first what real numbers really are.

\subsection{Structure of $\reals$}

There are many definitions of real numbers, all equivalent. We here state some of them, with which the reader may be familiar:
\begin{itemize}
	\item A real number is something that can be expressed as an infinite decimal expression. As an instance,
	\begin{equation*}
		\pi = 3.141592\dots
	\end{equation*}
	The set of all such numbers is called $\reals$.
	
	\item A real number is an element of a field $\reals$ that is:
	\begin{itemize}
		\item Ordered, meaning that there is an order relation $\leq$ compatible with the field operations of $\reals$;
		\item Complete, meaning that every non-empty set $S \subseteq \reals$ with an upper bound\footnote{An upper bound for $S \subseteq \reals$ is an $x \in \reals$ such that for all $s \in S$ it is $s \leq x$.} has a supremum\footnote{This means that the set of the upper bounds of $S$ has a minimum.}.
	\end{itemize}
	 This notion identifies $\reals$ uniquely since it can be shown that two ordered complete fields are always isomorphic.
	 
	 \item Given a sequence $p_1, p_2, \dots, p_n, \dots$, we say that it is a \emph{Cauchy sequence} if $\lim_{i,j \to \infty} | p_i - p_j | = 0$. This is nothing more than the usual definition of Cauchy sequence seen in undergrad calculus. Now consider the set of all the Cauchy sequences of rational numbers (call it $\overline{\rationals}$), that is, all the Cauchy sequences in which $p_i \in \rationals$ for every $i$. 
	 
	 Given two sequences $p_1, p_2, \dots, p_n, \dots$, $s_1, s_2, \dots, s_n, \dots$, we say that they are \emph{equivalent} if
	 \begin{equation*}
	 	\lim_{n \to \infty} |p_n - s_n | = 0
	 \end{equation*}
	 meaning that two sequences are equivalent if they approach the same limit. It can be shown that the definition above is an equivalence relation on $\overline{\rationals}$. The equivalence classes of this relation can be endowed with an ordered field structure that is complete, and this again characterizes $\reals$ since we already said that two complete ordered fields are isomorphic.
	 
	 Note that in this approach two equivalent Cauchy sequences define the same real number.
 \end{itemize}
As we already mentioned, it can be proved that all the definitions above are equivalent, but the reader can already appreciate some differences: The third one is constructive, while the second one is not. We will extend $\reals$ with infinities and infinitesimals following steps similar to the ones in the third construction, starting with sequences of real numbers and quotienting them in an opportune way. The next section is about finding what this opportune way is.

\subsection{Comparing Sequences}
In the last subsection we said that we begin our construction considering sequences of real numbers. If $r$ is a sequence, we will indicate with $r_n$ the $n$-th element of the sequence, that is of course just a real number.

Given two sequences $r, s$, we want to find some criterion to compare them. Intuitively, one way to go is to say that $r, s$ are equivalent if they agree on a \emph{large number} of elements, that is, if the set
\begin{equation*}
	E_{r,s} := \suchthat{n \in \naturals}{r_n = s_n}
\end{equation*}
is \emph{large}, whatever this means. Since we want this notion of equivalence to define an equivalence relation, we can already state some properties we want this notion of ``bigness'' to have:
\begin{itemize}
	\item $\naturals$ has to be large, this is because our equivalence relation has to be reflexive, and clearly $E_{r,r} = \naturals$;
	\item Equivalence relations are transitive. This means that if $E_{r,s}$ and $E_{s,k}$ are large, also $E_{r,k}$ has to be large. But $E_{r,k}$ contains $E_{r,s} \cap E_{s,k}$, and hence the property we are looking for is 
	\begin{center}
		If $A, B$ are large and $A \cap B \subseteq C$, then $C$ is large.
	\end{center}
	In particular this means that the set of large sets is closed for intersection and supersets, meaning that
	\begin{center}
		If $A,B$ are large and $A \subseteq C$, then $A \cap B$ and $C$ are large. 
	\end{center} 
	\item $\emptyset$ cannot be large: Every other set $K \subset \naturals$ has the property $\emptyset \subseteq K$, so if $\emptyset$ were to be large every subset of $\naturals$ would be large, making our equivalence relation trivial.
\end{itemize}
\begin{remark}
	The first notion of largeness that comes to mind is to say that a set $L$ is large if it is \emph{cofinite}, meaning that its complement $\naturals - L$ is a finite set. Unfortunately this notion does not work well: What we want ultimately do is to take our extension of the real numbers to be the sequences of reals quotiented by the equivalence relation we are defining. Moreover we want this extension to be \emph{totally ordered}, meaning that given two equivalence classes of sequences $r,s$ it is always $[r] < [s]$, $[l] = [s]$ or $[l] > [s]$. The natural way to say that $[r]$ is less than a sequence $[s]$ is to require that the set\footnote{Obviously we have to check that this definition is independent of the representatives of $[r], [s]$, but this is an easy exercise.}
	\begin{equation*}
		L_{r,s} : = \suchthat{n \in \naturals}{r_n < s_n}
	\end{equation*}
	is large. Now consider the sequences $r = (1,0,1,0, \dots), s=(0,1,0,1,\dots)$. Clearly $E_{r,s} = \emptyset$, so these two sequences are not identified by our relation, and we should be able to say $[r] < [s]$ or $[r] > [s]$. But both $L_{r,s}, L_{s,r}$ are not cofinite, and so we are not able to compare $[r],[s]$ if we take cofiniteness as a definition of ``large''. 
\end{remark}
	From this remark we deduce that we have to impose an extra requirement, namely that
	\begin{center}
		Given a set $A$, either $A$ or $\naturals - A$ is large.
	\end{center}
	We can immediately infer that if we require this $A$ and $\naturals - A$ cannot be both large, otherwise $A \cap \naturals - A = \emptyset$ would be large as well.
	
	The experienced reader will already have noted that our requirements to define what largeness is match the definition of \emph{ultrafilter}. This is indeed the tool we are going to use.
	
	\subsection{Ultrafilters}
	After the discussion in the last section we are able to formally state what we want.
	\begin{definition}
		Let $I$ be a nonempty set, and denote with $\Powerset{I}$ its powerset (the set of all subsets of $I$). A \emph{filter} on $I$ is a nonempty collection $\mathcal{F}$ of subsets of $I$ (that is, an $\mathcal{F} \subseteq \Powerset{I}$) such that
		\begin{itemize}
			\item If $A, B \in \mathcal{F}$, then $A \cap B \in \mathcal{F}$ ($\mathcal{F}$ is closed with respect to intersections);
			
			\item If $A \in \mathcal{F}$ and $A \subseteq B \subseteq I$, then $B \in \mathcal{F}$ ($\mathcal{F}$ is closed with respect to supersets).
		\end{itemize}
	Moreover, a filter $\mathcal{F}$ is called \emph{ultrafilter} if it satisfies the additional condition
	\begin{center}
		For any $A \subseteq I$, it is $A \in \mathcal{F}$ or $I-A \in \mathcal{F}$.
	\end{center}
	Note that a filter containing $\emptyset$ coincides with $\Powerset{I}$. Thus, we call $\mathcal{F}$ \emph{proper} if $\emptyset \not\in \mathcal{F}$.
	\end{definition}
\begin{example}
	The powerset $\Powerset{I}$ of any nonempty set $I$ is a filter (the trivial one, if you want) but not an ultrafilter.
\end{example}
\begin{example}
	The \emph{cofinite} (or \emph{Fr\'echet}) filter, briefly discussed in the previous subsection in the case of natural numbers, is defined as
	\begin{equation*}
	\mathcal{F}:=\suchthat{N \subseteq I}{I - N \text{ is finite}}
	\end{equation*}
	This is a filter but not in general an ultrafilter, as we have already seen in the previous subsection.
\end{example}
\begin{example}
	The set $\{I\}$ is an ultrafilter. Note that $I$ is an element of every filter because of the superset closure, hence $\{I\}$ is the smallest filter on $I$.
\end{example}
\begin{example}
	The \emph{principal ultrafilter generated by an element $i \in I$}, defined as
	\begin{equation*}
	\mathcal{F}:=\suchthat{S \subseteq I}{i \in S}
	\end{equation*}
	is an ultrafilter (as the name suggests, I'd say\dots)
\end{example}

A very important fact, the proof of which can be found on~\cite[p.18 (1), p. 21 Cor.2.6.2]{Goldblatt1998}, is that
\begin{theorem}\label{thm:nonprincipalexists}
	Every filter on a finite set $I$ is a principal ultrafilter. If $I$ is infinite, then it admits a NON principal ultrafilter on it.
\end{theorem}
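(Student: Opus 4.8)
I would prove the two halves of the statement separately, since they are essentially independent.

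\textbf{The finite case.} Let $I$ be finite and $\mathcal{F}$ a proper filter on $I$ (implicitly one restricts to proper ultrafilters here: the improper filter $\Powerset{I}$ is trivially an ultrafilter by the definition above but is not principal). Because $\Powerset{I}$ is finite, $\mathcal{F}$ is a finite collection of sets, so $A := \bigcap_{F \in \mathcal{F}} F$ is a finite intersection of members of $\mathcal{F}$, hence $A \in \mathcal{F}$ by iterated closure under intersections, and $A \neq \emptyset$ since $\mathcal{F}$ is proper. Closure under supersets then shows that $\mathcal{F}$ is exactly $\{ B \subseteq I : A \subseteq B \}$, i.e. the filter generated by the single set $A$. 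To see that such a filter is a \emph{principal ultrafilter} in the sense of the examples above, pick $i \in A$ and apply the ultrafilter condition to $\{i\}$: every member of $\mathcal{F}$ contains $A$ and $A \not\subseteq I \setminus \{i\}$, so we cannot have $I \setminus \{i\} \in \mathcal{F}$, hence $\{i\} \in \mathcal{F}$; therefore $A \subseteq \{i\}$, so $A = \{i\}$ and $\mathcal{F} = \{ B \subseteq I : i \in B \}$.

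\textbf{The infinite case.} The plan is the classical maximal-filter argument. Let $\mathcal{F}_{0}$ be the Fr\'echet (cofinite) filter on the infinite set $I$; it is proper because $I \setminus \emptyset = I$ is infinite, so $\emptyset \notin \mathcal{F}_{0}$. Order the collection of all proper filters containing $\mathcal{F}_{0}$ by inclusion; the union of a chain of such filters is again a proper filter containing $\mathcal{F}_{0}$ (closure under intersections and supersets passes to the union precisely because the chain is linearly ordered, and $\emptyset$ stays out because it is absent from every member), so Zorn's lemma supplies a maximal proper filter $\mathcal{U} \supseteq \mathcal{F}_{0}$. I would then establish the standard fact that a maximal proper filter is an ultrafilter: given $A \subseteq I$ with $A \notin \mathcal{U}$, the collection $\mathcal{U}' := \{ B \subseteq I : A \cap F \subseteq B \text{ for some } F \in \mathcal{U} \}$ is a filter (closure under intersections uses that $\mathcal{U}$ is closed under intersections) containing both $\mathcal{U}$ and $A$, so by maximality it is improper, i.e. $\emptyset \in \mathcal{U}'$; hence $A \cap F = \emptyset$, i.e. $F \subseteq I \setminus A$, for some $F \in \mathcal{U}$, so $I \setminus A \in \mathcal{U}$. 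Finally $\mathcal{U}$ is non-principal: were it the principal ultrafilter at some $i$, then $\{i\} \in \mathcal{U}$ while also $I \setminus \{i\} \in \mathcal{F}_{0} \subseteq \mathcal{U}$, giving $\emptyset = \{i\} \cap (I \setminus \{i\}) \in \mathcal{U}$, contradicting properness.

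\textbf{The main obstacle.} Everything in the finite case is pure bookkeeping; the only genuinely non-elementary ingredient is the appeal to Zorn's lemma (equivalently the axiom of choice) in the infinite case — exactly the non-constructive step the introduction flagged, and one that cannot be avoided. The auxiliary lemma ``maximal proper filter $=$ ultrafilter'' is routine but deserves the small care of checking that the set $\mathcal{U}'$ above really is closed under finite intersections.
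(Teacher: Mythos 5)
Your proof is correct and is essentially the argument the paper defers to (it cites Goldblatt rather than proving the theorem itself): the finite case by intersecting the finitely many members of the filter and using the ultrafilter dichotomy on a singleton, and the infinite case by Zorn's lemma applied to proper filters extending the Fr\'echet filter, with non-principality following because the cofinite sets are already inside. Your parenthetical observation that the first half only holds as literally stated for \emph{proper ultrafilters} (e.g.\ the trivial filter $\{I\}$ on a two-element $I$ is a filter but not a principal ultrafilter) is a fair and correct reading of what the theorem intends, not a gap in your argument.
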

We will use this theorem later, so keep it in mind.

\subsection{Construction of $\starReals$}
Now we developed enough stuff to really begin our journey. As we said, we start from sequences of real numbers.
Since a sequence $(r_1, r_2, \dots)$ of real numbers can be seen as a function $\naturals \to \reals$ and vice-versa, we can denote with $\reals^\naturals$ the set of all such sequences. We will usually denote with $r$ the sequence $(r_1, r_2, \dots)$ (mnemonic rule: unless otherwise specified, such as in ``$r \in \reals$'', if $r$ has a subscript it's a real, otherwise it's a sequence of reals!), and with $\mathbf{r}$ the constant sequence $(r,r,\dots)$ for $r \in \reals$.

The first thing we want to do is to endow $\reals^\naturals$ with a ring structure. Given sequences $r,s$ we can define
\begin{align*}
	r \oplus s :&= (r_1 + s_1, r_2 + s_2, \dots)\\
	r \odot s :&= (r_1\cdot s_1, r_2 \cdot s_2, \dots) 
\end{align*}
$(\reals^\naturals, \oplus, \otimes)$ is a commutative ring, with zero $\mathbf{0} = (0,0,\dots)$, unity $\mathbf{1} = (1,1, \dots)$ and addictive inverses $-r = (-r_1, -r_2, \dots)$ for every $r$. It is not a field since $(0,1,0,1,\dots) \odot (1,0,1,\dots) = \mathbf{0}$
and so there are non-zero sequences that do not possess multiplicative inverses.

Our aim is to turn this ring into an ordered field. As espected, we will reach our goal quotienting $(\reals^\naturals, \oplus, \otimes)$ by an ultrafilter on the natural numbers. We will show in a bit that we need this ultrafilter to be non principal to get infinities and infinitesimals from our extension.
\begin{lemma}
	Let $\mathcal{F}$ be a non principal ultrafilter on $\naturals$, the existence of which is guaranteed by theorem~\ref{thm:nonprincipalexists}. The relation $\equiv$ on $\reals^\naturals$ defined by
	\begin{equation*}
		r \equiv s \iffdef \suchthat{n \in \naturals}{r_n = s_n} \in \mathcal{F}
	\end{equation*}
	is an equivalence relation, that is moreover compatible with the operations $\oplus, \odot$, meaning that if $r \equiv r', s \equiv s'$, then $r \oplus s \equiv r' \oplus s'$ and $r \odot s \equiv r' \odot s'$. 
\end{lemma}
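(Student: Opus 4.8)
The plan is to verify each clause directly from the two filter axioms --- closure under finite intersections and closure under supersets --- plus the elementary fact that $\naturals \in \mathcal{F}$. It is worth noting in advance that none of these arguments will use that $\mathcal{F}$ is an ultrafilter or that it is non-principal; those hypotheses are dragged along only because they become essential later, when the quotient is formed and one wants infinitesimals to appear.

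First I would record that $\naturals \in \mathcal{F}$: since $\mathcal{F}$ is nonempty we may pick some $A \in \mathcal{F}$, and then $A \subseteq \naturals$ together with superset closure gives $\naturals \in \mathcal{F}$. Reflexivity follows at once, because $\suchthat{n \in \naturals}{r_n = r_n} = \naturals$. Symmetry is equally immediate, as $\suchthat{n \in \naturals}{r_n = s_n}$ and $\suchthat{n \in \naturals}{s_n = r_n}$ denote the very same set.

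The one clause with any content is transitivity, and even it is routine. Assuming $r \equiv s$ and $s \equiv k$, put $A := \suchthat{n \in \naturals}{r_n = s_n}$ and $B := \suchthat{n \in \naturals}{s_n = k_n}$; both lie in $\mathcal{F}$, so $A \cap B \in \mathcal{F}$ by intersection closure. For every $n \in A \cap B$ one has $r_n = s_n = k_n$, whence $A \cap B \subseteq \suchthat{n \in \naturals}{r_n = k_n}$, and superset closure then puts the latter set in $\mathcal{F}$, i.e. $r \equiv k$. This is exactly the ``intersect the agreement sets, then pass to a superset'' move anticipated in the motivating discussion, so no surprises arise.

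Compatibility with $\oplus$ and $\odot$ runs on the same template, and I would handle both at once. Given $r \equiv r'$ and $s \equiv s'$, set $A := \suchthat{n \in \naturals}{r_n = r'_n}$ and $B := \suchthat{n \in \naturals}{s_n = s'_n}$; both are in $\mathcal{F}$, hence so is $A \cap B$. For $n \in A \cap B$ we get $r_n + s_n = r'_n + s'_n$ and $r_n \cdot s_n = r'_n \cdot s'_n$ simply because addition and multiplication are genuine functions on $\reals$, so $A \cap B$ is contained in $\suchthat{n \in \naturals}{(r \oplus s)_n = (r' \oplus s')_n}$ and in $\suchthat{n \in \naturals}{(r \odot s)_n = (r' \odot s')_n}$; superset closure finishes it. I do not expect any genuine obstacle here --- the only point worth keeping in mind is that the plain filter axioms, not the ultrafilter condition, carry the whole argument.
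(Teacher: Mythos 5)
Your proof is correct and is exactly the argument the paper intends: it leaves this lemma unproved, but the preceding discussion on ``largeness'' already sketches the key move (intersect the agreement sets, then pass to a superset), which is precisely what you carry out for transitivity and for compatibility with $\oplus$ and $\odot$. Your observation that only the filter axioms --- and neither the ultrafilter condition nor non-principality --- are needed here is also accurate and consistent with where the paper actually invokes those stronger hypotheses later.
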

When $r \equiv s$ we say that $r$ and $s$ \emph{agree almost everywhere modulo $\mathcal{F}$}. Borrowing some logical notation the set $\suchthat{n \in \naturals}{r_n = s_n}$ can be denoted as $[[r = s]]$, and can be thought as a measure of the extent to which ``$r=s$'' is true. Quotienting $\reals^\naturals$ by $\mathcal{F}$ can be tought as collapsing this measure into a binary one: $r, s$ agree ``enough to be equal'' or they do not. The notation $[[r = s]]$ is utterly convenient to generalise this procedure to statements other than equality, as an instance setting
\begin{equation*}
	[[r \leq s]] := \suchthat{n \in \naturals}{r_n \leq s_n}
\end{equation*}
Clearly we can now say that $r \leq s$ iff $[[r \leq s]] \in \mathcal{F}$. Moreover, it is easy to prove that if $r \equiv r', s \equiv s'$ then $[[r \leq s]] \in \mathcal{F}$ if and only if $[[r' \leq s']] \in \mathcal{F}$, meaning that also $\leq$ is somehow well-behaved with respect to $\equiv$. Generalization to $[[r < s]], [[r \geq s]], [[r > s]]$ is obvious. 

Since $\oplus, \odot, <$ are well-behaved with respect to $\equiv$, we can extend these operations to the quotient $\reals^\naturals/ \mathcal{F}$. We denote with $[r]$ an element of $\reals^\naturals/ \mathcal{F}$, that, being an equivalence class modulo $\mathcal{F}$, is the set of all $r' \in \reals^\naturals$ such that $r \equiv r'$. We set
\begin{align*}
	[r] + [s] &:= [r \oplus s]\\
	[r] \cdot [s] &:= [r \odot s]\\
	[r] < [s] &\iffdef r \leq s	
\end{align*}
\begin{theorem}
	$(\reals^\naturals/ \mathcal{F}, +, \cdot, <)$ is an ordered field. 
\end{theorem}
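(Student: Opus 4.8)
The plan is to verify the three bundles of axioms — commutative ring, field, total order compatible with the operations — one after another, each time translating the statement into a pointwise condition on representative sequences and then invoking the closure properties of $\mathcal{F}$: closure under finite intersections and supersets, the dichotomy ``$A\in\mathcal{F}$ or $\naturals-A\in\mathcal{F}$'', and properness, $\emptyset\notin\mathcal{F}$ (the non-principal ultrafilter furnished by theorem~\ref{thm:nonprincipalexists} is proper). Throughout I use that $+$, $\cdot$ and $<$ on the quotient are already known to be independent of the chosen representatives.

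First I would dispose of the ring structure, which is essentially automatic: the preceding lemma says $\equiv$ is a congruence for $\oplus$ and $\odot$, and $(\reals^\naturals,\oplus,\odot)$ is already a commutative ring, so every ring identity descends verbatim to $\reals^\naturals/\mathcal{F}$, with neutral elements $[\mathbf{0}],[\mathbf{1}]$ and additive inverses $-[r]=[-r]$. The only addendum is nontriviality, $[\mathbf{0}]\neq[\mathbf{1}]$, which holds because $[[\mathbf{0}=\mathbf{1}]]=\emptyset\notin\mathcal{F}$.

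Next comes the field axiom, and this is the first place where being an ultrafilter rather than a mere filter is used. Given $[r]\neq[\mathbf{0}]$ we have $[[r=\mathbf{0}]]\notin\mathcal{F}$, so by the dichotomy $A:=\suchthat{n\in\naturals}{r_n\neq 0}\in\mathcal{F}$. Put $s_n:=r_n^{-1}$ for $n\in A$ and $s_n:=0$ otherwise; then $[[r\odot s=\mathbf{1}]]\supseteq A\in\mathcal{F}$, so superset closure gives $[r]\cdot[s]=[\mathbf{1}]$, i.e. $[r]$ is invertible.

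Finally I would check that $<$ is a strict linear order compatible with $+$ and $\cdot$. Irreflexivity is $[[r<r]]=\emptyset\notin\mathcal{F}$; transitivity follows from $[[r<s]]\cap[[s<t]]\subseteq[[r<t]]$; compatibility with addition from $[[r<s]]\subseteq[[r\oplus t<s\oplus t]]$, and with multiplication from $[[\mathbf{0}<r]]\cap[[\mathbf{0}<s]]\subseteq[[\mathbf{0}<r\odot s]]$ — all by intersection/superset closure. The one genuinely substantial point, which I expect to be the main obstacle, is trichotomy: $\naturals$ is the disjoint union of $[[r<s]]$, $[[r=s]]$ and $[[s<r]]$, so I would first establish that an ultrafilter is prime (if a finite union lies in $\mathcal{F}$, so does one of its members: otherwise every complement, hence their intersection — the complement of the union — is in $\mathcal{F}$, contradicting properness), and conclude that exactly one of the three sets lies in $\mathcal{F}$, the middle case being, by definition of $\equiv$, precisely $[r]=[s]$, the others $[r]<[s]$ and $[s]<[r]$; disjointness plus properness rule out more than one holding. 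Apart from trichotomy and the inversion step — both of which really use the ultrafilter dichotomy and properness — the whole argument is routine bookkeeping with ``large'' sets.
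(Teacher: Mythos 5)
Your proposal is correct and follows essentially the same route as the paper: the ring axioms descend routinely from the congruence lemma, inverses are built by inverting $r_n$ on the large set $[[r\neq\mathbf{0}]]$ obtained from the ultrafilter dichotomy, and trichotomy comes from the fact that an ultrafilter must contain exactly one member of a finite disjoint cover of $\naturals$. You are in fact somewhat more thorough than the paper, which leaves the order-compatibility checks and the ``prime'' property of ultrafilters as exercises.
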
 
\begin{proof}
	Showing that $\reals^\naturals/ \mathcal{F}$ is a ring with zero $[\mathbf{0}]$ and unit $[\mathbf{1}]$ is an easy exercise. To define multiplicative inverses, take an element $[r] \neq [\mathbf{0}]$. This means that $r$ is non-zero almost everywhere modulo $\mathcal{F}$, and so that $[[r = \mathbf{0}]] \notin \mathcal{F}$. Being $\mathcal{F}$ an ultrafilter, this implies that its complement $[[r \neq \mathbf{0}]] \in \mathcal{F}$.
	Setting
	\[
	s_n := \begin{cases}
	\frac{1}{r_n} \text{ if } n \in [[r \neq \mathbf{0}]]\\
	0 \text{ otherwise}
	\end{cases}
	\]
	then defines a sequence $s$ with the property that $[[r \odot s = \mathbf{1}]] = [[r \neq \mathbf{0}]] \in \mathcal{F}$, and hence $[r] \cdot [s] = [r \odot s] = [\mathbf{1}]$, meaning that $[s]$ is a multiplicative inverse of $[r]$.
	We now have to check that the order relation $<$ on $\reals^\naturals/ \mathcal{F}$ is total, meaning that for $[r],[s]$ it is always $[r]>[s]$, $[r]=[s]$ or $[r]<[s]$.
	Noting that $[[r > s]], [[r=s]], [[r < s]]$ are disjoint sets and that their union is the whole $\naturals$, the conclusion follows from a general property about ultrafilters (easy to prove!):
	\begin{center}
		If $A_1, A_2, \dots$ are pairwise disjoint sets such that $\bigcup_i A_i \in \mathcal{F}$, then $A_i \in \mathcal{F}$ \emph{for exactly one $i$}.
	\end{center}
	There are still some trifles to prove (like that the set of $[\mathbf{r}]$ such that $[\mathbf{0}] \leq [\mathbf{r}]$ is closed under addition), that are left to the reader. From now on, the ordered field $\reals^\naturals/ \mathcal{F}$ will be denoted with $\starReals$ and called the set of \emph{Hyperreals} (equivalently, \emph{Non-standard reals}). 
\end{proof}	
Importantly, we can send every real number $r \in \reals$ to the equivalence class $[\mathbf{r}] \in  {\starReals}$, where $\mathbf{r}$ is the constant sequence $(r,r, \dots)$. This correspondence is a ordered field homomorphism, and so we can regard $\reals$ as a subfield of $\starReals$: This tells us that the latter is an extension of the former in the ``intuitive'' sense. The common notation to define an element of $\starReals$ that corresponds to some $r \in \reals$ is $\nonstd{r}$. That is, $\nonstd{r} = [\mathbf{r}]$.

\subsection{What about $\mathcal{F}$?}
We built the hyperreal numbers as a quotient by some non principal ultrafilter $\mathcal{F}$. But which one? What if we have multiple choices of $\mathcal{F}$? Is there a right one to pick?

The answer to this question is \emph{it doesn't matter}: If we assume the \emph{continuum hypothesis} it can be proved that all the quotients of $\reals^\naturals$ by a non principal ultrafilter are isomorphic as ordered fields, and this is why we left the choice of $\mathcal{F}$ completely undetermined.

The continuum hypothesis is an important conjecture about cardinality of sets, and it has been proved by Paul Cohen that this conjecture is indecidable in the usual Zermelo-Fraenkel-Choice formalization of set theory. In particular, assuming it is true or not does not produce any contradiction, so it is up to us to decide if we want it or not in our axiomatization of set theory. Clearly we do, and in this way the problem of picking the right ultrafilter for our construction vanishes completely.

\subsection{Infinities and infinitesimals}
Let $\epsilon$ be the sequence $(1, \frac{1}{2}, \frac{1}{3}, \dots)$.
Clearly $[[\mathbf{0} \leq \epsilon]] = \naturals$ and hence is in $\mathcal{F}$. On the other hand, for every \emph{positive} $r \in \reals$ it is $[[\mathbf{0} \leq \mathbf{r}]]$; moreover the set $[[\epsilon < \mathbf{r}]]$ is cofinite. We now use another general property of ultrafilters (again, easy to prove!):
\begin{center}
	If $\mathcal{F}$ is non principal, it contains all the cofinite sets.
\end{center}
From this we infer that $[[\epsilon < \mathbf{r}]] \in \mathcal{F}$ for every positive real $r$, and hence we conclude that $[\epsilon]$ is a \emph{positive infinitesimal}. We can generalize this to define \emph{negative infinitesimals} in the obvious way, and we will refer to \emph{infinitesimals} in general when we do not specify a sign. If we set $[\omega] = [\epsilon]^{-1} = [(1,2,3,\dots)]$ then for every positive $r \in \reals$ the set $[[\mathbf{r} < \omega]]$ is again cofinite, and hence we can regard $[\omega]$ as a positive unlimited quantity. The existence of such $[\epsilon], [\omega]$ shows that $\starReals$ is a \emph{proper extension} of $\reals$, that is, it contains elements that cannot be put in correspondence with any real number.
\begin{remark}
	Note how having such $[\epsilon], [\omega]$ crucially depends on the fact that $\mathcal{F}$ is non principal. Otherwise, we would have $\mathcal{F}:= \suchthat{S \subseteq \naturals}{n \in S}$ for a fixed natural number $n$. But then any sequence $s = (s_1, s_2, \dots, ...)$ would agree with $(s_n, \dots, s_n)$ on the $n$-th number and hence $[[s = \mathbf{s_n}]] \in \mathcal{F}$, meaning that $[s] = [\mathbf{s_n}] = \nonstd{s_n}$. When $\mathcal{F}$ is principal, then, every element of $\reals/\mathcal{F}$ comes from a real number and we do not obtain anything new from our construction.
\end{remark}
If $r$ is a sequence that converges to zero, using the strategy above we can easily check that $[|r|]$ is an infinitesimal in $\starReals$ (note that, however, it is not guaranteed that $[r] = [\epsilon]$: there are many different infinities and infinitesimals in $\starReals$!). Similarly, if $r \to \infty$ then $[r]$ is unlimited, so the notion of infinities and infinitesimals in our extension $\starReals$ captures the intuitive concepts of infinity and infinitesimal that we know from undergrad calculus. 

Moreover, infinities (that from now on we will prefer to call ``unlimited quantities'') and infinitesimals are elements of $\starReals$, and hence we can multiply them, sum them etc. Their arithmetic will be explored in detail later.

\subsection{Extending sets, functions, relations}\label{sec:extending sets functions relations}
\begin{remark}
	For all the definitions given below one has to check that they are well defined, in the sense that they do not depend on the choice of representatives in an equivalence class $[r] \in \starReals$. This is indeed the case and verifying it can be a useful exercise.
\end{remark}
If $A$ is a subset of $\reals$, we can consider the set $\nonstd{A}$ defined by
\begin{equation*}
	[r] \in \nonstd{A} \iffdef \suchthat{n \in \naturals}{r_n \in A} \in \mathcal{F}
\end{equation*}
That is, $[r] \in \nonstd{A}$ if $r \in A$ almost everywhere. We call the elements of $\nonstd{A} - A$ \emph{non-standard}. As an example, using the definition it is easy to see that $[\omega] \in \starNaturals$, but clearly $[\omega] \neq [\mathbf{n}]$ for any natural $n$. Hence $[\omega]$ is a non-standard natural number.

Similarly, we can extend functions. If $f:\reals \to \reals$ then $\nonstd{f}:\starReals \to \starReals$ is defined as
\begin{equation*}
	\nonstd{f}([(r_1, r_2, \dots)]) = [(f(r_1), f(r_2), \dots]
\end{equation*}
We can also extend functions $f:A \to \reals$. To do this, for each $[r] \in \nonstd{A}$ define 
\begin{equation*}
	s_n :=
	\begin{cases}
		f(r_n) \text{ if } r_n \in A\\
		0 \text{ otherwise}
	\end{cases}
\end{equation*}
Then we set $\nonstd{f([r])} = [s]$. We need to do this because $[r] \in \nonstd{A}$ means that $r \in A$ \emph{almost everywhere}, and hence there could be some $r_n$ of the sequence $r$ that is not in $A$, on which $f$ is not well defined. With the definition above we formally set $f$ to $0$ on these elements. Notice moreover that if $r$ is a real number in $A$, then $\nonstd{f}({\nonstd{r}}) = {\nonstd{(f(r))}}$: $f$ and $\nonstd{f}$ agree on the real numbers. For this reason it makes sense to drop the $\nonstd{} $ symbol to denote the extension, and this is what we will do for most functions. As an instance, we will use $x^2$ both to denote the usual function $\reals \to \reals$ and its extension $\starReals \to {\starReals}$.

Note in particular that a sequence is a function $s:\naturals \to \reals$, hence we can extend it to an \emph{hypersequence} $s:\starNaturals \to \starReals$: We can now look which values it assumes at infinite since this corresponds to just unlimited numbers in $\starNaturals$. This is exactly what we will use to calculate limits.

Finally, we can extend real $k$-ary relations: A real $k$-ary relation $P$ is nothing but a subset of $\reals^k$, and hence we can set
\begin{equation*}
	\nonstd{P}([r^1], [r^k]) \iffdef \suchthat{n \in \naturals}{P(r^1_n, \dots r^k_n)} \in \mathcal{F}
\end{equation*}
It is easy to see that the previous extensions are just particular cases of this one: A set $A \subseteq \reals$ is just an unary real relation and every function $f:A \to \reals$ can be identified with the relation
$\suchthat{(a,r)}{f(a) = r}$. You can check that extending sets and functions seeing them as relations gives us the same results we got performing the extension directly.
\section{The transfer theorem}\label{sec:transfer theorem}
The \emph{transfer theorem} (also called \emph{transfer principle}, according to your taste) is what tells us which properties get preserved from statements on standard sets, like $\naturals$, $\reals$ etc. to their extensions $\starNaturals$, $\starReals$, \dots. Up to now we built our stuff ``manually'', having to verify every time some things: Namely, we said that some property held for a $[r]$ if it held almost everywhere on its components, that is, if the set of the $r_n$ for which the property held in the ``standard'' universe was in the ultrafilter $\mathcal{F}$. Clearly this ain't an easy way to do mathematics since at every step lots of conditions have to be checked and verified. The transfer theorem can be seen as a huge shortcut to avoid all this meticulous work.

To fully understand how the transfer theorem works quite a bit of a background in logic is needed. In here, we will provide a practical recipe with examples and counterexamples that should be more than sufficient to use it with confidence.

First of all we have to specify what do we mean when we say ``statement on a set''. 
Given a set $S$, we consider the triplet $(S,\mathfrak{R}_S, \mathfrak{F}_S)$, where $\mathfrak{R}_S$ and $\mathfrak{F}_S$ are the set of all the possible $k$-ary relations (for all finite $k$) and functions on $S$, respectively. Note that since a $k$-ary relation on $S$ is just a subset of $S^k$,  $S^k$ is a relation itself. $\mathfrak{R}_S$ then includes also all the subsets of $S$ (seen as unary relations) and all the finite products of subsets of $S$.

This triplets are called \emph{relational structures}, and each of them comes with a \emph{language}: 
This language is the set of all the well-formed formulas in which we allow the use of variables (ex. the $x$ in $x \in \naturals$), constants, that are just elements of $S$ (ex. the $1$ in $1 \in \naturals$), functional symbols in $\mathfrak{F}_S$ (ex. the $f$ in $f(x) \in \naturals$), relational symbols in $\mathfrak{R}_S$ (ex. the $\in$ in $x \in \naturals$) and the usual logical connectives $\wedge$, $\vee$, $\neg$, $\to$, $\leftrightarrow$, parentheses and ``$,$''. We moreover allow universal and existential quantification of variables as long as the quantifiers range on a relation in $\mathfrak{R}_S$.
This means that we allow \emph{only} quantification over elements of subsets of finite products of $S$, hence statements like $\forall K \in \Powerset{S}$ are in general not allowed since $\Powerset{S}$ cannot be expressed as a subset of $S^k$ for some finite $k$ if $S$ is infinite.

A \emph{statement on a set $S$} is a formula in the language of the relational structure on $S$ in which every variable is bounded by a quantifier, that is, every variable can ``range'' over a defined set.
\begin{example}
	Here some examples:
	\begin{itemize}
		\item $\forall K \subset \naturals, \emptyset \in K$ is not a formula in the language of $\naturals$, since quantification over $K$ is not allowed ($K$ is a subset of $\naturals$ and hence a variable ranging in $\Powerset{\naturals}$);
		\item $x \in \naturals$ is a formula in the language of $\naturals$, but not a statement since $x$ is not bounded;
		\item $1 \in \naturals$ is a statement in the language of $\naturals$ since there are no variables around, and hence everything is trivially bounded;
		\item $\forall x \in \naturals, x+1 \in \naturals$ is a statement in the language of $\naturals$ since every variable is bound;
		\item $\forall n \in \naturals, \exists r \in \reals : n<r$ is a statement in the language of $\reals$: both variables are bounded, one on a subset of $\reals$ and one on $\reals$ itself; It is not a statement in the language of $\naturals$ since we cannot define $\reals$ by means of subsets of finite products of $\naturals$.
		\item $\forall c \in \complexs, c+1 \in \reals$ is a statement (clearly false, but still a statement) in the language of $\reals$: $\complexs$ can be seen as $\reals^2$, and hence our quantification is legit! It is clearly also a statement in the language of $\complexs$.
	\end{itemize}
\end{example}
\subsection{$\nonstd{}$-transformations}
We saw in section~\ref{sec:construction of starreals} how to build $\starReals$ from $\reals$ by means of the ultrapower construction. This can be done for every set $S$, and we can transform a statement on $S$ to a statement on $\nonstd{S}$ using the tools developed in section~\ref{sec:extending sets functions relations} as follows:
\begin{itemize}
	\item We replace every set $A$ in the statement with its extension $\nonstd{A}$;
	\item We replace every function $f$ with its extension $\nonstd{f}$;
	\item We replace every constant $s$ with its interpretation in the extension $\nonstd{s}$.
\end{itemize}
As an instance, the statement $\forall x \in \naturals, x+1 \in \naturals$ becomes $\forall x (\nonstd{\in}) \starNaturals, x(\nonstd{+})\nonstd{1} (\nonstd{\in} )\starNaturals$. For some special relation and function symbols though, such as ``$+$'', ``$=$'', ``$\in$'', ``$\leq$'' and the like, we avoid to write explicitly the $\nonstd{}$ symbol to avoid clutter. In the previous example then we will usually write $\forall x \in \starNaturals, x+\nonstd{1} \in \starNaturals$. Note that this is not ambiguous: The $\in$ symbol, having $\starNaturals$ on the right, must necessarily be the extended version of $\in$, otherwise the formula would not make sense. Similarly, the $+$ is acting on elements $x, \nonstd{1}$ that are both in $\starNaturals$ and hence must denote the extension of the function $+$.

Finally, the relation between a statement and its $\nonstd{}$-transformation is expressed by the transfer theorem:
\begin{theorem}[Transfer theorem]
	If a property $\phi$ is true in the language of $S$, then $\nonstd{\phi}$ is true in the language of $\nonstd{S}$.
\end{theorem}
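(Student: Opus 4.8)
The plan is to prove a stronger, two-sided statement by induction on the syntactic structure of formulas — essentially \L o\'s's theorem for the ultrapower $\nonstd{S}=S^\naturals/\mathcal{F}$ — since a bare one-sided implication does not propagate through negation in such an induction, so the load-bearing claim must be a biconditional. Spelled out, for every formula $\phi(x_1,\dots,x_k)$ in the language of the relational structure on $S$ (free variables displayed) and all sequences $r^1,\dots,r^k \in S^\naturals$, I would show
\[
	\nonstd{\phi}\bigl([r^1],\dots,[r^k]\bigr) \text{ holds in } \nonstd{S}
	\quad\iff\quad
	\suchthat{\,n \in \naturals\,}{\phi(r^1_n,\dots,r^k_n)\text{ holds in }S} \in \mathcal{F}.
\]
Taking $k=0$ (a statement in the sense of the excerpt, i.e.\ every variable bound, so all the $r^i$ collapse to constants) recovers the transfer theorem: if $\phi$ is true in $S$ the right-hand index set is all of $\naturals\in\mathcal{F}$, hence $\nonstd{\phi}$ is true in $\nonstd{S}$. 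The converse direction, which the induction forces us to carry along, is exactly what lets one use transfer to show that something \emph{fails} in $\nonstd{S}$ by checking it fails in $S$.

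First I would dispose of a \emph{term lemma}: by a sub-induction on the construction of terms, evaluating a term $t$ on $\bigl([r^1],\dots,[r^k]\bigr)$ inside $\nonstd{S}$ returns $\bigl[(t(r^1_n,\dots,r^k_n))_n\bigr]$, with the ``$0$ otherwise'' convention of Section~\ref{sec:extending sets functions relations} covering indices where an intermediate (partial) function symbol is undefined; the base cases are constants ($s\mapsto[\mathbf{s}]$) and variables, and the step is just the componentwise definition of $\nonstd{f}$. Given this, the atomic case of the main induction is immediate from the \emph{definitions} of $\nonstd{A}$ and $\nonstd{P}$: $\nonstd{P}([r^1],\dots,[r^k])$ holds precisely when $\suchthat{n}{P(r^1_n,\dots,r^k_n)}\in\mathcal{F}$. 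The propositional connectives are then pure ultrafilter bookkeeping: writing $[[\,\cdot\,]]$ for the index set of a subformula, $[[\phi\wedge\psi]]=[[\phi]]\cap[[\psi]]$ with closure under intersection and supersets handles $\wedge$; the ultrafilter dichotomy ``$A\in\mathcal{F}$ or $\naturals-A\in\mathcal{F}$, never both'' together with $[[\neg\phi]]=\naturals-[[\phi]]$ handles $\neg$; and $\vee,\to,\leftrightarrow$ follow, being definable from $\wedge$ and $\neg$.

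The substance of the argument — and the step I expect to be the main obstacle — is bounded quantification, above all the existential case $\exists x\in A,\ \psi(x,\vec y)$, where one must recall that the bound $A$ ranges over members of $\mathfrak{R}_S$ (so $A\subseteq S^j$ and $x$ is really a $j$-tuple of variables) and that the transform is $\exists x\in\nonstd{A},\ \nonstd{\psi}$. Passing from membership of the index set $J:=\suchthat{n}{\exists a\in A,\ \psi(a,r^1_n,\dots,r^k_n)}$ in $\mathcal{F}$ to truth in $\nonstd{S}$ requires manufacturing a witness sequence: for each $n\in J$ choose $a_n\in A$ with $\psi(a_n,r^1_n,\dots,r^k_n)$ — this is precisely where the axiom of choice is invoked — and set $a_n$ arbitrarily for $n\notin J$; then $[a]\in\nonstd{A}$ since $\suchthat{n}{a_n\in A}\supseteq J\in\mathcal{F}$, and $\suchthat{n}{\psi(a_n,r^1_n,\dots,r^k_n)}\supseteq J\in\mathcal{F}$, so the induction hypothesis for $\psi$ yields $\nonstd{\psi}([a],[r^1],\dots,[r^k])$ and hence the desired existential. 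Conversely, a witness $[r]\in\nonstd{A}$ for $\nonstd{\psi}$ gives, by the induction hypothesis, $\suchthat{n}{\psi(r_n,r^1_n,\dots,r^k_n)}\in\mathcal{F}$ and $\suchthat{n}{r_n\in A}\in\mathcal{F}$, whose intersection lies in $\mathcal{F}$ and witnesses $J\in\mathcal{F}$. The universal case $\forall x\in A,\ \psi$ can then be obtained either by the mirror-image argument or simply from the existential case via $\neg$. Two standing caveats run through the whole proof: every starred object must be representative-independent, which was flagged (and left as an exercise) in Section~\ref{sec:extending sets functions relations}; and the syntactic restriction that quantifiers range only over members of $\mathfrak{R}_S$ — never over $\Powerset{S}$ — is exactly what guarantees that $\nonstd{}$ is only ever applied to objects for which it has been defined, so that $\nonstd{\phi}$ even makes sense.
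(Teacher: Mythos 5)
The paper never actually proves this theorem --- it is stated as a black box, with the reader deferred to Goldblatt for details --- so there is no internal proof to measure yours against; what you propose is, however, exactly the standard argument: {\L}o\'s's theorem for the ultrapower $S^{\naturals}/\mathcal{F}$, proved by induction on formula complexity, which is essentially the proof found in the cited reference. Your structural choices are the right ones, and you have put your finger on the genuinely load-bearing points: the strengthening to a biconditional over formulas with free variables (a one-sided implication cannot pass through the negation clause of the induction, and the reverse direction you carry along is precisely the ``existential transfer'' corollary the paper states separately); the fact that the atomic case holds \emph{by definition} of $\nonstd{A}$, $\nonstd{f}$ and $\nonstd{P}$ from Section~\ref{sec:extending sets functions relations}; that closure under intersections and supersets handles $\wedge$ while the dichotomy ``$A\in\mathcal{F}$ or $\naturals - A\in\mathcal{F}$'' handles $\neg$ --- worth noting that this is the \emph{only} place where $\mathcal{F}$ being an ultrafilter rather than a mere filter is used, and that non-principality plays no role in transfer at all (it is needed only to make the extension proper); and the invocation of the axiom of choice to assemble a witness sequence in the existential case, which is indeed the one step with real content. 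Your closing caveat is also the correct diagnosis of the theorem's scope: restricting quantification to members of $\mathfrak{R}_S$ is exactly what makes $\nonstd{\phi}$ well formed, and is why, e.g., completeness of $\reals$ fails to transfer. The only cosmetic wrinkle is the term lemma's ``$0$ otherwise'' clause, which presupposes a distinguished element of $S$; for a general relational structure one fixes an arbitrary element of the relevant nonempty set, and since the choice only affects indices outside a set belonging to $\mathcal{F}$, the resulting equivalence class is unaffected. Taking $k=0$ then yields the theorem exactly as stated.
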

Taking $S$ to be $\reals$ and focusing only on universal quantification, we get the obvious corollary
\begin{theorem}[Universal transfer]
	If a property $\phi$ holds for \emph{all} real numbers, then $\nonstd{\phi}$ holds for \emph{all} hyperreal numbers.
\end{theorem}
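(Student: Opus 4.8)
The plan is to obtain this statement as an immediate specialisation of the Transfer theorem; the only actual work is to recognise ``$\phi$ holds for all real numbers'' as a statement in the language of $\reals$ and to compute its $\nonstd{}$-transformation.

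First I would make the hypothesis precise. To say that a property $\phi(x)$ holds for every real number is exactly to assert the truth, in the language of the relational structure $(\reals, \mathfrak{R}_{\reals}, \mathfrak{F}_{\reals})$, of the statement $\psi := \forall x \in \reals,\ \phi(x)$. Here $\phi(x)$ is a formula of that language whose only free variable is $x$: every constant, functional symbol and relational symbol occurring in it is drawn from $\reals$, $\mathfrak{F}_{\reals}$ and $\mathfrak{R}_{\reals}$ respectively, and every other variable is already bound by a quantifier ranging over some relation in $\mathfrak{R}_{\reals}$. Hence $\psi$ is indeed a statement in the sense defined above, and the hypothesis of the theorem is precisely that $\psi$ is true in the language of $\reals$.

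Next I would read off the $\nonstd{}$-transformation $\nonstd{\psi}$. By the recipe for $\nonstd{}$-transformations one replaces each set occurring in $\psi$ by its extension, each function by its extension, and each constant by its interpretation; the logical skeleton of the formula --- in particular the leading universal quantifier and the manner in which $x$ is bound --- is left untouched, and only the range of that quantifier changes. Since $\nonstd{\reals} = \starReals$ by the ultrapower construction of section~\ref{sec:construction of starreals}, the clause $\forall x \in \reals$ turns into $\forall x \in \starReals$, while the matrix $\phi(x)$ turns into $\nonstd{\phi}(x)$. Thus $\nonstd{\psi}$ is literally $\forall x \in \starReals,\ \nonstd{\phi}(x)$, i.e. the assertion that $\nonstd{\phi}$ holds for all hyperreal numbers.

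Finally I would invoke the Transfer theorem: since $\psi$ is true in the language of $\reals$, $\nonstd{\psi}$ is true in the language of $\nonstd{\reals} = \starReals$, which is exactly the desired conclusion. I do not expect a genuine obstacle here; the only point deserving a moment's attention is the bookkeeping of the two previous paragraphs --- checking that ``holds for all real numbers'' really is expressible as a single statement over $\reals$ (so that Transfer applies at all) and that its $\nonstd{}$-transform has the advertised shape. Once those definitions are unwound there is nothing left to do.
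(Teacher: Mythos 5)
Your proposal is correct and follows exactly the paper's own route: the paper likewise observes that ``$\phi$ holds for all reals'' is a statement in the language of $\reals$ whose $\nonstd{}$-transform is the universally quantified statement over $\starReals$, and then applies the Transfer theorem. Your version merely spells out the bookkeeping (that $\forall x \in \reals,\ \phi(x)$ is a bona fide statement and that its transform has the advertised shape) which the paper leaves implicit.
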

If you think about it, this corollary is obvious: A property that holds for all real numbers is a statement on the language of the reals, hence its extension must hold, by transfer theorem, on all the hyperreals, exactly as the corollary says. It is nevertheless very useful:
\begin{example}
	We can now prove that $\starReals$ is a ordered field without having to check a lot of bullshit, simply considering
	\begin{align*}
	(\forall x,y,z \in \reals)((x+y)+z=x+(y+z)) &\qquad 
	(\forall x,y,z \in \reals)((x \cdot y) \cdot z=x \cdot (y \cdot z))\\
	(\forall x \in \reals)(x + 0 = x)\ &\qquad
	(\forall x \in \reals)(x \cdot 1 = x)\\
	(\forall x, y \in \reals)(x+y = y + x) &\qquad
	(\forall x, y \in \reals)(x \cdot y = y \cdot x)\\
	(\forall x \in \reals)( \exists y \in \reals)(x + y = 0) &\qquad
	(\forall x \in \reals)(x \neq 0 \rightarrow (\exists y \in \reals)x \cdot y = 1)\\
	(\forall x,y,z \in \reals)( x \cdot &(y + z) = x \cdot y + x \cdot z )\\
	(\forall x,y \in \reals)((x \leq &y) \vee (x = y) \vee (x \geq y))
	\end{align*}
	Since these formulas are always true in $\reals$, their $\nonstd{}$-transforms are also true in $\starReals$, proving that $\starReals$ is an ordered field. Note that $\starReals$ is not complete: the completeness property cannot, in fact, be expressed using only quantification over variables (you also need to quantify over subsets and the like), and hence cannot be transferred. If you recall that all ordered fields are isomorphic, then this should not be surprising: If $\starReals$ were to be complete it would have been isomorphic to $\reals$, and we wouldn't have obtained anything new!
\end{example}

Another interesting corollary of the Transfer theorem is
\begin{theorem}[Existential transfer]
	If there is an hyperreal satisfying some property $\nonstd{\phi}$, then there is some real satisfying $\phi$.
\end{theorem}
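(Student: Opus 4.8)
The plan is to obtain this corollary as the contrapositive of a single application of the Transfer theorem, reading ``property'' as a formula $\phi(x)$ in the language of $\reals$ with one free variable and ``there is a hyperreal satisfying $\nonstd{\phi}$'' as the statement $(\exists x \in \starReals)\,\nonstd{\phi}(x)$ being true in the language of $\starReals$. Since an existential is most cheaply killed by transferring a universal, I would prove the equivalent implication: if \emph{no} real number satisfies $\phi$, then \emph{no} hyperreal satisfies $\nonstd{\phi}$.

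First I would assume $(\exists x \in \reals)\,\phi(x)$ is false, so that
\begin{equation*}
	\psi := (\forall x \in \reals)\,\neg\phi(x)
\end{equation*}
is a true statement in the language of $\reals$ --- it is well formed, its only variable is bound, and it holds precisely because $\phi$ has no real witness. Next I would recall that the $\nonstd{}$-transformation only rewrites the non-logical ingredients of a statement ($A \mapsto \nonstd{A}$, $f \mapsto \nonstd{f}$, $s \mapsto \nonstd{s}$) and leaves the connectives and the quantifier scaffolding alone; hence it commutes with $\neg$ and turns the bounded quantifier $\forall x \in \reals$ into $\forall x \in \starReals$, giving
\begin{equation*}
	\nonstd{\psi} = (\forall x \in \starReals)\,\neg\,\nonstd{\phi}(x).
\end{equation*}
Finally I would invoke the Transfer theorem on $\psi$: since $\psi$ is true in the language of $\reals$, $\nonstd{\psi}$ is true in the language of $\starReals$, i.e.\ every hyperreal fails $\nonstd{\phi}$. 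This is exactly the negation of $(\exists x \in \starReals)\,\nonstd{\phi}(x)$, which is what the contrapositive asked for.

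The only place needing genuine care --- the only thing I would actually have to argue rather than quote --- is the bookkeeping around admissibility: that $\phi$ really lives in the relational structure of $\reals$, so that $\psi$ and $\nonstd{\psi}$ are legal statements in the sense fixed above, and that $\nonstd{(\neg\phi)} = \neg\,\nonstd{\phi}$ together with $\nonstd{\reals} = \starReals$. Both are immediate from the definition of the $\nonstd{}$-transformation, and if $\phi$ carries extra parameters one simply keeps their $\nonstd{}$-images fixed throughout. As a sanity check, and an alternative route that bypasses Transfer entirely, I would note the direct ultrapower argument: if $[r] \in \starReals$ satisfies $\nonstd{\phi}$ then, unwinding the definition of the extended relation, the set $\suchthat{n \in \naturals}{\phi(r_n)}$ belongs to $\mathcal{F}$; since $\mathcal{F}$ is proper this set is nonempty, and any $n$ in it yields a real number $r_n$ with $\phi(r_n)$. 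Either way there is no real computation to do --- just an honest use of Transfer, or of the properness of $\mathcal{F}$.
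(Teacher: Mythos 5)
Your main argument is correct and is exactly what the paper intends (the paper offers no explicit proof, presenting the statement as an immediate corollary of the Transfer theorem): you pass to the contrapositive, transfer the true statement $(\forall x \in \reals)\,\neg\phi(x)$ to $(\forall x \in \starReals)\,\neg\nonstd{\phi}(x)$, and note that the $\nonstd{}$-transformation leaves connectives and quantifier scaffolding untouched, which only requires the forward direction of transfer as stated. One small caveat on your side remark: the ``direct ultrapower route'' does not really bypass Transfer, since the identification of ``$[r]$ satisfies $\nonstd{\phi}$'' with ``$\suchthat{n \in \naturals}{\phi(r_n)} \in \mathcal{F}$'' holds by definition only when $\phi$ is an atomic relation in $\mathfrak{R}_{\reals}$; for a compound formula that equivalence is precisely the content of the transfer theorem's own proof, so it is a restatement rather than an independent argument.
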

\begin{example}
	The existential transfer theorem can be very useful: As an example, take Rolle's theorem from undergrad calculus:
	\begin{center}
		If a function $f$ is continuous and differentiable on $[a,b]$ and $f(a) = f(b)$, then $f'(x) = 0$ for some $x \in [a,b]$.
	\end{center}
	This can be expressed saying that if $f$ is ``such and such'', then it exists a real number $x$ such that ``blablabla''. The idea is that if we can express the ``such and such'' and the ``blablabla'' part of the statement (that is, the bits involving derivatives, continuity and the like) as statements on the reals (and we obviously can, as we will see later), then we can consider the following
	\begin{center}
		If a function $\nonstd{f}$ is $\nonstd{}$-continuous and $\nonstd{}$-differentiable on $\nonstd{([ a,b])}$ and $\nonstd{f}(\nonstd{a}) = \nonstd{f}(\nonstd{b})$\\
		then $\nonstd{f}'(x) = \nonstd{0}$ for some $x \in \nonstd{([ a,b])}$.
	\end{center}
	And the two statements will be completely entangled: if one is true so is the other. The difference is that in the second case we have much more choice for our $x$, since it can be an hyperreal. The idea is that even if we find that an $x$ satisfying the condition turns out to be in $\starReals - \reals$, the transfer theorem ensures that there will be some $x' \in \reals$ for which the first statement holds.
\end{example}
The core of non-standard analysis, then, is the following: Every time we have a statement on $\nonstd{S}$ that is the $\nonstd{}$-transform of some statement $\phi$ on $S$, we can ``drag'' the result back to $S$. Clearly the statements on $\nonstd{S}$ that are not in the form $\nonstd{\phi}$ for some $\phi$ on $S$ are \emph{NOT} transferable, and this is exactly where one has to pay attention. 
\begin{example}
	Consider a sequence $s:\naturals \to \reals$ and extend it to $\nonstd{s}:\starNaturals \to \starReals$.
	Suppose we can show that the sequence $\nonstd{s}$ never takes infinite values. We then want to prove that $s$ is a bounded sequence. Since $\nonstd{s}$ never takes infinite values the following statement must be true for some unlimited $\omega$:
	\begin{equation*}
		(\forall n \in \starNaturals)(|\nonstd{s}(n)| \leq \omega)
	\end{equation*}
	Since $\omega$ is unlimited, though, it must be $\omega \in \starReals - \reals$ and hence the statement above is \emph{not} in a transferrable form, because $\omega$ cannot be expressed as $\nonstd{r}$ for some real $r$. We then try to massage the statement in a transferrable form: In fact, since $\omega \in \starReals$, the formula above implies that the following is true:
	\begin{equation*}
		(\exists r \in \starReals)(\forall n \in \starNaturals)(|\nonstd{s}(n)| \leq r)
	\end{equation*}
	but now we can transfer this back to our ``standard'' world! In fact, from here we can apply the transfer theorem and deduce that the following must be true too:
	\begin{equation*}
	(\exists r \in \reals)(\forall n \in \naturals)(|s(n)| \leq r)
	\end{equation*}	
	That obviously means that $s$ is bounded on the reals.
\end{example}
\begin{remark}[A bit of jargon]
	Consider now the triplet $(\nonstd{S}, \mathfrak{R}_{\nonstd{S}}, \mathfrak{F}_{\nonstd{S}})$, that is, the relational structure on $\nonstd{S}$. Relations in $\mathfrak{R}_{\nonstd{S}}$ that are of the form $\nonstd{R}$ for some $R$ in $\mathfrak{R}_{S}$ are called \emph{internal relations}. Clearly not all relations are internal, otherwise we could always transfer everything back and forth. Similarly we get a definition of \emph{internal function}. What the transfer theorem says is that a statement can be transferred from $\nonstd{S}$ to $S$ if and only if it comprises only internal relations and functions.
\end{remark}
\begin{center}
	\textbf{- Mantra of non-standard analysis -}\\
	\textbf{Pay attention!} Transfer theorem from $S$ to $\nonstd{S}$ is always ok. When you go in the other direction make sure, before applying it, that 100\% of your stuff is in transferrable form, that is, it is made of \emph{statements} (and not just formulas) in which only internal relations and functions are present.
\end{center}
\subsection{Aritmethic of $\nonstd{\reals}$}
First of all, some useful definitions. Since we know from section~\ref{sec:construction of starreals} that every real $s$ uniquely corresponds to an hyperreal $\nonstd{s}$, we will just call an hyperreal ``a real number'' when we mean that it is in the form $\nonstd{s}$ for some $s \in \reals$. An hyperreal $b$ is called
\begin{itemize}
	\item \emph{limited} if it is $r < b < r'$ for some $r,r' \in \reals$;
	\item \emph{positive unlimited} if $r < b$ for all $r \in \reals$;
	 \emph{negative unlimited} if $b < r$ for all $r \in \reals$;
	 \emph{unlimited} if it is positive or negative unlimited;
	\item \emph{positive infinitesimal} if $0 < b < r$ for all positive $r \in \reals$; 
	 \emph{negative infinitesimal} if $r < b < 0$ for all negative $r \in \reals$;
	 \emph{infinitesimal} if it ispositive or negative infinitesimal, or $0$;
	\item \emph{appreciable} if it is limited but not infinitesimal: $r < |b| < r'$ for some $r, r' \in \reals^+$.
\end{itemize}
From this it is clear that all reals and infinitesimal are limited, and that the only real infinitesimal is $0$. All reals are appreciable. You can think of appreciable number as an hyperreal that ``is not too fucked up'' with respect to real numbers.
\begin{proposition}
	Let $\epsilon, \delta$ be infinitesimals, $b,c,$ appreciable and $H, K$ unlimited. 	We have that the following rules hold\footnote{This is all pretty intuitive, just think about $b,c$ as ``more or less normal'', $\epsilon, \delta$ as ``damn close to 0'' and $H, K$ as ``incredibly big or incredibly small''. In particular the fact that $H+K$ is indeterminate is easy to understand if you consider that $H,K$ can have opposite signs. Who ``wins'' in this case? God knows.}:
	\begin{itemize}
		\item $-\epsilon, -\delta$ are infinitesimal, $-b, -c$ are appreciable and $-H, -K$ are unlimited;
		\item $\epsilon + \delta$ is infinitesimal, $b + c$ is limited (but possibly infinitesimal), $b+\epsilon$ is appreciable and $H+b$, $H+\epsilon$ are unlimited.
		\item $\epsilon \cdot \delta$ is infinitesimal, $b \cdot c$ is appreciable and $H \cdot b, H \cdot K$ are unlimited;
		\item $\frac{1}{\epsilon}$ is unlimited if $\epsilon \neq 0$, $\frac{1}{b}$ is appreciable and $\frac{1}{H}$ is infinitesimal;
		\item $\frac{\epsilon}{H}, \frac{\epsilon}{b}, \frac{b}{H}$ are infinitesimal, $\frac{b}{c}$ is appreciable, $\frac{H}{\epsilon}, \frac{b}{\epsilon}, \frac{H}{b}$ are unlimited;
		\item if $\epsilon, b, H$ are all $>0$, then $\sqrt[n]{\epsilon}, \sqrt[n]{b}$ and $\sqrt[n]{H}$ are infinitesimal, appreciable and unlimited, respectively;
		\item $\frac{\epsilon}{\delta}, \frac{H}{K}, \epsilon \cdot H$ and $H+K$ are undeterminate.
	\end{itemize}
	Note that infinitesimals and limited hyperreals form two rings (the first obviously without unit), denoted $\Infinitesimals$  and $\Limited$, respectively. $\Infinitesimals$ is also an ideal of $\Limited$.
\end{proposition}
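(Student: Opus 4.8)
The plan is to reduce every clause to elementary order-algebra inside the ordered field $\starReals$, using repeatedly that $\reals$ sits in $\starReals$ as an ordered subfield, so that the triangle inequality, monotonicity of multiplication by positives, and the defining identities of the elementary real functions are available on $\starReals$ either outright (because $\starReals$ is an ordered field) or by the transfer theorem. The sign rules are immediate: negation reverses the order, so $r<b<r'$ becomes $-r'<-b<-r$, and likewise for the infinitesimal and unlimited classes.

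For the additive rules I would argue as follows. Given a positive real $r$, infinitesimality of $\epsilon,\delta$ gives $|\epsilon|<r/2$, $|\delta|<r/2$, whence $|\epsilon+\delta|<r$ by the triangle inequality; since $r$ was an arbitrary positive real, $\epsilon+\delta$ is infinitesimal. For $b+c$ limited one simply adds real bounds; for $b+\epsilon$ one notes it is limited as a sum of limited hyperreals and that the reverse triangle inequality gives $|b+\epsilon|\ge|b|-|\epsilon|>s/2>0$ where $s$ is a positive real with $|b|>s$ and $|\epsilon|<s/2$, so it is appreciable. For $H+b$ and $H+\epsilon$, taking $H>0$ without loss of generality, one has $H+b\ge H-|b|>r$ for every positive real $r$ once $H>r+|b|$, which is possible because $b$ (resp.\ $\epsilon$) is limited. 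The multiplicative and reciprocal rules go the same way: $|\epsilon\delta|<|\epsilon|<r$ since $|\delta|<1$; $|bc|$ is squeezed between products of positive real bounds, hence appreciable; $|Hb|>|H|\,s>r$ once $|H|>r/s$ where $|b|>s$ (this is where appreciability of $b$, not mere limitedness, is used); $|HK|>|H|>r$ since $|K|>1$; $|1/\epsilon|>r$ whenever $|\epsilon|<1/r$; $1/b$ is squeezed between reciprocals of real bounds; and $1/H$ is infinitesimal dually. The division rules then follow by writing each quotient as a product of one of these with the reciprocal of another and invoking the rules just proved, together with the observations that the reciprocal of an appreciable is appreciable and that reciprocation swaps the infinitesimal and unlimited classes.

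For the root clause, $\sqrt[n]{\,\cdot\,}$ on the nonnegative hyperreals is the extension of the real $n$-th root, and both its defining identity $(\sqrt[n]{x})^n=x$ and its monotonicity ($0\le x<y \to \sqrt[n]{x}<\sqrt[n]{y}$) are statements on $\reals$, hence transfer to $\starReals$; then $\epsilon<r^n$ yields $\sqrt[n]{\epsilon}<r$ for every positive real $r$, and the appreciable and unlimited cases are identical with $r^n$ replaced by the relevant real bounds. For the indeterminate forms it suffices to exhibit, for each of the three target classes, a witness realizing it: with $\epsilon=[(1/n)]$ and the second quantity ranging over $[(1/n)]$, $[(1/n^2)]$, $[(1/n^{1/2})]$ one gets $\epsilon/\delta$ and $\epsilon H$ landing in any class, dually for $H/K$ with $H=[(n)]$, and for $H+K$ one takes $K=-H+c$ with $c$ infinitesimal, appreciable, or unlimited. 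Finally, closure of $\Infinitesimals$ and $\Limited$ under $+$ and $-$ was established above; $\Limited$ is closed under $\cdot$ since $|b|<r$, $|c|<r'$ give $|bc|<rr'$, and it contains $[\mathbf 0],[\mathbf 1]$, so it is a ring; $\Infinitesimals$ contains $[\mathbf 0]$ but not $[\mathbf 1]$, so it is a ring without unit; and it is an ideal of $\Limited$ because $\epsilon b$ is infinitesimal whenever $\epsilon$ is infinitesimal and $b$ limited, since $|b|<r$ forces $|\epsilon b|<s$ once $|\epsilon|<s/r$.

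There is no genuine obstacle here, only bookkeeping. The only place that leans on the transfer machinery rather than bare ordered-field manipulation is the root clause, where one must note that monotonicity of $\sqrt[n]{\,\cdot\,}$ is a transferable statement; and the only place inviting carelessness is the sign handling in the unlimited additive and multiplicative cases, together with checking that the indeterminacy witnesses really do cover all three outcomes.
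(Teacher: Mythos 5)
Your argument is correct, but there is nothing in the paper to compare it against: the proposition is stated with no proof at all, only a footnote appealing to intuition (the full arguments are in Goldblatt's book, which the paper defers to throughout). What you supply is the standard elementary verification, and it holds up: the order-reversal for negation, the $r/2$-splitting with the triangle inequality for sums of infinitesimals, the reverse triangle inequality for $b+\epsilon$, and the reduction of all quotient rules to products with reciprocals are exactly the right bookkeeping. You also correctly identify the two places where care is actually needed — that $H\cdot b$ unlimited uses a positive real \emph{lower} bound on $|b|$ (appreciability, not mere limitedness; indeed $H\cdot\epsilon$ is indeterminate), and that the indeterminate forms require explicit witnesses in each of the three classes, which your sequences $[(1/n)]$, $[(1/n^2)]$, $[(1/n^{1/2})]$ and the choice $K=-H+c$ do provide. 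The one clause that cannot be done by bare ordered-field algebra is the $n$-th root, and you rightly route it through transfer of the defining identity and monotonicity of $\sqrt[n]{\cdot}$ on $\reals$. The closing ring/ideal verification for $\Infinitesimals\subseteq\Limited$ is likewise complete. In short: the proposal fills a gap the paper deliberately leaves open, and does so correctly.
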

\subsubsection{Halos, Galaxies, Shadows}\label{subsec:halos}
Given two hyperreals $r, s$, three things can happen:
\begin{itemize}
	\item $r$ and $s$ are \emph{infinitesimally close}, that is, $r-s$ is infinitesimal. In this case we write $r \infnear s$. This defines an equivalence relation and for every $r$ we define the \emph{halo} of $r$ as
	\begin{equation*}
		\Halo{r} := \suchthat{r' \in \starReals}{r' \infnear r}
	\end{equation*}
	$\Halo{0}$ is just $\Infinitesimals$, and it can be proved that for all $r \in \starReals$,  $\Halo{r}:= \suchthat{r + \epsilon}{\epsilon \in \Halo{0}}$.
	
	Note that Robinson has a different name for halos: he calls them \emph{monads} and denotes them with $\mu(r)$. Since one of the main goals of this tutorial is to be able to employ non-standandard analysis in a categorical context it is obvious that the latter notation is complete bollocks for us. So we will keep using the word ``halo'' to avoid ambiguity.
	\item  $r$ and $s$ are a limited distance near, that is, $r - s$ is limited. In this case we write $r \near s$. This defines an equivalence relation and for every $r$ we define the \emph{galaxy} of $r$ as
	\begin{equation*}
	\Galaxy{r} \iffdef \suchthat{r' \in \starReals}{r' \near r}
	\end{equation*}
	$\Galaxy{0}$ is just $\Limited$, and it can be proved that for all $r \in \starReals$,  $\Galaxy{r}:= \suchthat{r + l}{l \in \Galaxy{0}}$.
	\item The distance between $r$ and $s$ is unlimited. We are in general not interested in this situation.
\end{itemize}
One of the most important things we will use is the following:
\begin{theorem}
	For every limited hyperreal $r$, there is \emph{exactly one} real number in $\Halo{r}$, denoted $\Shadow{r}$ and called \emph{shadow of $r$}. That is, every limited hyperreal is infinitesimally close to exactly one real number. An alternative name for $\Shadow{r}$ is \emph{standard part of $r$}.
\end{theorem}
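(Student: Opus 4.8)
The plan is to construct $\Shadow{r}$ explicitly as a supremum — invoking the completeness of $\reals$, which is precisely one of the facts about $\reals$ that does \emph{not} transfer to $\starReals$ — and then to read off uniqueness from the observation, already recorded above, that the only real infinitesimal is $0$.

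\textbf{Existence.} Since $r$ is limited there are $r_1,r_2\in\reals$ with $\nonstd{r_1}<r<\nonstd{r_2}$. Put $S:=\suchthat{s\in\reals}{\nonstd{s}<r}$. Then $S$ is nonempty ($r_1\in S$) and bounded above ($r_2$ is an upper bound), so by completeness $x:=\sup S$ exists in $\reals$; I claim $x\infnear r$. Fix any real $\epsilon>0$. Since $x-\epsilon$ is not an upper bound of $S$ there is $s\in S$ with $x-\epsilon<s$, and the embedding $\reals\hookrightarrow\starReals$ being order preserving gives $\nonstd{x}-\nonstd{\epsilon}<\nonstd{s}<r$, i.e. $r-\nonstd{x}>-\nonstd{\epsilon}$. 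Conversely $x+\epsilon>x=\sup S$ forces $x+\epsilon\notin S$, which unwinds to $\neg(\nonstd{(x+\epsilon)}<r)$; by totality of $<$ on $\starReals$ this means $r\leq\nonstd{x}+\nonstd{\epsilon}$, i.e. $r-\nonstd{x}\leq\nonstd{\epsilon}$. So $-\nonstd{\epsilon}<r-\nonstd{x}\leq\nonstd{\epsilon}$ for every real $\epsilon>0$; replacing $\epsilon$ by $\epsilon/2$ on the right if one insists on a strict bound, this is exactly the statement that $r-\nonstd{x}$ is infinitesimal, hence $x\in\Halo{r}$.

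\textbf{Uniqueness.} If $x,y\in\reals$ both lie in $\Halo{r}$, then $\nonstd{x}-\nonstd{y}=(\nonstd{x}-r)+(r-\nonstd{y})$ is a sum of two infinitesimals, hence infinitesimal since $\Infinitesimals$ is closed under addition. But $x-y$ is a real number, and the only real infinitesimal is $0$, so $x=y$. This legitimises the notation $\Shadow{r}$ for the unique element of $\Halo{r}\cap\reals$.

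\textbf{Where the real work is.} Nonemptiness, boundedness, and the uniqueness step are all immediate. The only delicate point is the supremum argument: the naive step ``$x+\epsilon\notin S\Rightarrow\nonstd{(x+\epsilon)}>r$'' only yields a non-strict inequality, so one must either accept the $\leq$ bound (harmless, since it still pins $|r-\nonstd{x}|$ below every positive real) or squeeze with $x+\epsilon/2$; in either case the argument leans on totality of the order on $\starReals$, which is available from the proof that $\starReals$ is an ordered field.
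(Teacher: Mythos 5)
Your proposal is correct and follows essentially the same route as the paper: construct the shadow as $\sup\suchthat{s\in\reals}{\nonstd{s}<r}$ via completeness of $\reals$, squeeze $r-\nonstd{x}$ between $-\nonstd{\epsilon}$ and $\nonstd{\epsilon}$ for every positive real $\epsilon$, and get uniqueness from the fact that the only real infinitesimal is $0$. Your handling of the non-strict inequality at $x+\epsilon$ is if anything slightly more careful than the paper's.
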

\begin{proof}
	Consider the set $A:=\suchthat{s \in \reals}{s < r}$. Since $r$ is limited there are reals $s',s''$ such that $s'' < r < s'$. The first part of the inequation tells us that $A$ is not empty, while the second tells us that it is bounded. Since $\reals$ is complete, $A$ has then a supremum, call it $a$. We want to prove that $a$ is infinitesimally close to $r$. 
	To do this we will prove that $|r - a| < e$ for all positive reals $e$ (definition of infinitesimal). 
	
	Since $a$ is a supremum for $A$ and for all $e \in \reals^+$ it is $a < a + e$, then $a+e$ can not be in $A$. This means $r \leq a+e$ by definition. 
	
	Now consider $a-e$. This is a real number and moreover $a-e \leq a$, so it cannot be $r \leq a-e$ otherwise $s < r \leq a-e$ against the fact that $a$ is the supremum of $A$. This implies
	$a-e < r$, and hence $a - e < r \leq a +e$, that means $|r-a| < e$, as we wanted.
	
	Now unicity. Suppose $s \infnear r$ for $s$ real. Recalling what we just proved we get $s \infnear r \infnear a$ and being $\infnear$ an equivalence relation, $s \infnear a$. This means $|s - a | < e$ for all positive reals $e$, but being $s, a$ reals also $| s - a |$ is real and the only infinitesimal real is $0$, proving $s =  a$.
\end{proof}
To mnemonically remember this last result you can think about a (not that good) song by Green Day, that in the chorus goes like ``My shadow's the only one that walks beside me''!
\begin{proposition}
	Shadows are very well behaved, and in fact the assignment $r \mapsto \Shadow{r}$ is a ordered field homomorphism from $\Limited$ to $\reals$. For every limited $r,s \in \starReals$ we indeed have:
	\begin{align*}
		\Shadow{r \pm s} = \Shadow{r} \pm \Shadow{s} & \qquad 
			\Shadow{r \cdot s} = \Shadow{r} \cdot \Shadow{s}\\
		(\forall n \in \naturals) (\Shadow{r^n} = \Shadow{r}^n)  & \qquad
			(\forall n \in \naturals, r \geq 0)(\Shadow{\sqrt[n]{r}} = \sqrt[n]{\Shadow{r}})\\	
			\Shadow{|r|} = |\Shadow{r}| & \qquad
				r \leq s \rightarrow \Shadow{r} \leq \Shadow{s}
	\end{align*}
	Noting moreover that the shadow of an hyperreal $r$ is $0$ if and only if $r$ is infinitesimal, we deduce that the kernel of the shadow map is just $\Infinitesimals$. The cosets of $\Infinitesimals$ in $\Limited$ are, for each $r \in \Limited$, the sets $\suchthat{r + \epsilon}{\epsilon \in \Infinitesimals = \Halo{0}}$, that is just $\Halo{r}$, so $\Halo{b} \mapsto \Shadow{b}$ is an injective monomorphism $\Limited/\Infinitesimals$ to $\reals$. Finally, recalling that every real number is the shadow od some limited hyperreal, we have that the correspondence $\Halo{b} \mapsto \Shadow{b}$ is also surjective, proving $\Limited/\Infinitesimals$ and $\reals$ to be isomorphic as ordered fields.
\end{proposition}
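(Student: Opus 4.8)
The engine of the proposition is the decomposition afforded by the previous theorem: every limited hyperreal $r$ can be written, \emph{uniquely}, as $r = \Shadow{r} + \epsilon_r$ with $\Shadow{r} \in \reals$ and $\epsilon_r \in \Infinitesimals$. My plan is to verify each identity by substituting this decomposition on the left-hand side, expanding with the ring operations of $\starReals$, and splitting the result into a genuinely real part plus an infinitesimal remainder; uniqueness of the shadow then forces the real part to be the shadow of the whole expression. Throughout I use two facts already at hand: that $\reals \hookrightarrow \starReals$ is an ordered field homomorphism, so that sums, products, powers and roots of reals computed inside $\starReals$ are again the obvious reals; and the arithmetic of $\starReals$ from the preceding proposition, crucially that $\Infinitesimals$ is an ideal of $\Limited$, so that a limited hyperreal times an infinitesimal is infinitesimal.

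With this in hand the additive and multiplicative identities are immediate. For the sum, $r + s = (\Shadow{r} + \Shadow{s}) + (\epsilon_r + \epsilon_s)$: the first bracket is real and the second infinitesimal, so $\Shadow{r + s} = \Shadow{r} + \Shadow{s}$, and the same computation with $-s$ handles the difference. For the product, $r \cdot s = \Shadow{r}\Shadow{s} + (\Shadow{r}\epsilon_s + \Shadow{s}\epsilon_r + \epsilon_r\epsilon_s)$; each of the three terms in the bracket is a limited hyperreal times an infinitesimal, hence infinitesimal, so the bracket is infinitesimal and $\Shadow{r \cdot s} = \Shadow{r}\Shadow{s}$. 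Preservation of $0$ and $1$ is trivial, so $r \mapsto \Shadow{r}$ is a ring homomorphism $\Limited \to \reals$, and $\Shadow{r^n} = \Shadow{r}^n$ follows by an immediate induction on $n$ from the product rule. For order preservation, suppose $r \leq s$ but $\Shadow{r} > \Shadow{s}$; then $c := \Shadow{r} - \Shadow{s}$ is a positive real and $r - s = c + (\epsilon_r - \epsilon_s)$, which is positive since an infinitesimal cannot outweigh a positive real, contradicting $r \leq s$. For $n$-th roots with $r \geq 0$ limited, note first that $t := \sqrt[n]{r}$ is limited, for otherwise $t^n = r$ would be unlimited; since $t \geq 0$ we get $\Shadow{t} \geq 0$ by order preservation, and $\Shadow{t}^n = \Shadow{t^n} = \Shadow{r}$ by the power rule, so $\Shadow{t}$ is the nonnegative $n$-th root of $\Shadow{r}$, i.e. $\Shadow{\sqrt[n]{r}} = \sqrt[n]{\Shadow{r}}$. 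Finally $\Shadow{|r|} = |\Shadow{r}|$ follows by splitting on the sign of $r$ and using that the shadow map preserves order, so $\Shadow{r}$ has the same weak sign as $r$: if $r \geq 0$ then $|r| = r$ and $|\Shadow{r}| = \Shadow{r}$, while if $r \leq 0$ then $|r| = -r$ and $|\Shadow{r}| = -\Shadow{r} = \Shadow{-r}$.

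For the structural claim: $\Shadow{r} = 0$ holds iff the unique real in $\Halo{r}$ is $0$, i.e. iff $r \infnear 0$, i.e. iff $r$ is infinitesimal; hence the kernel of the shadow map is exactly $\Infinitesimals$. Since the cosets of $\Infinitesimals$ in $\Limited$ are precisely the sets $\suchthat{r + \epsilon}{\epsilon \in \Infinitesimals} = \Halo{r}$, already noted when halos were introduced, the first isomorphism theorem tells us that $\Halo{r} \mapsto \Shadow{r}$ is a well-defined injective ring homomorphism $\Limited / \Infinitesimals \to \reals$. It is surjective because $a = \Shadow{\nonstd{a}}$ for every $a \in \reals$; transporting the order of $\reals$ back along this bijection (the unique one compatible with the field structure) upgrades the ring isomorphism to an isomorphism of ordered fields, as claimed.

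None of the steps is deep. The spots that want a little care are the product rule, where one needs limited $\times$ infinitesimal $=$ infinitesimal (that is, that $\Infinitesimals$ is an ideal of $\Limited$), and the $n$-th root case, where one must first argue that $\sqrt[n]{r}$ stays limited and then select the correct nonnegative branch; together with the brief ``an infinitesimal cannot outweigh a positive real'' step used for order preservation, these are the only mildly fiddly points, and I expect the $n$-th root to be the main one.
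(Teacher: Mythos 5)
Your proof is correct, and where the paper argues at all (the kernel/coset/first-isomorphism part sketched inside the proposition statement) you follow exactly the same route; the six arithmetic identities are simply asserted in the paper, and your decomposition $r = \Shadow{r} + \epsilon_r$ together with uniqueness of the shadow and the fact that $\Infinitesimals$ is an ideal of $\Limited$ is the standard way to fill them in. The only mildly informal spot is transporting the order onto $\Limited/\Infinitesimals$ at the end, but the paper glosses over this in precisely the same way.
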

\subsubsection{Structure of the hypernaturals}
Hypernaturals (elements of $\starNaturals$) look like $\naturals$ only ``locally''. $\starNaturals$ can in fact be described as an abominous collage of densely ordered copies of $\integers$. To show this, first note that the only limited hypernaturals are the elements of $\naturals$. In fact if $n \in \starNaturals$ is limited we have $n \leq m$ for some $m \in \naturals$. Consider now the formula on $\naturals$
\begin{equation*}
	(\forall x \in \naturals)(x \leq m \rightarrow ((x=0) \vee (x=1) \vee \dots \vee (x = m)))
\end{equation*}
this formula is obviously true and then by transfer so it is its $\nonstd{}$-transform. Hence $n \leq m$ implies that $(n = 0)\vee \dots \vee (n = m)$, proving $n$ is an element of $\naturals$. From this we infer that $\starNaturals - \naturals$ consists only of positive unlimited numbers.

Given two hypernaturals $K, H$, if $K < H$ and $K \not \near H$ then $\Galaxy{K} < \Galaxy{H}$, meaning that every element in the set on LHS is less than every element in the RHS. 
If we define $\gamma(K) := \suchthat{H \in \starNaturals}{K \near H} = \Galaxy{K} \cap \starNaturals$, then again $K < H, K \not \near H$ implies $\gamma(K) < \gamma (H)$, while by definition $K \near H$ implies $\gamma(K) = \gamma(H)$. 

When $K$ is an unlimited hypernatural, $\gamma(K)$ can be also written as $\suchthat{K + m}{m \in \integers}$: this is mainly because if $K$ is positive unlimited so is $K + m$ for every integer $m$, and by definition $K + m \near K$. From here it is easy to see that, when $K$ is in $\starNaturals - \naturals$, $\gamma(K)$ isomorphic to the ordered \emph{set} \integers (they are not isomorphic as monoids though, because obviously $(K+m) + (K + n) = 2K + m + n$ and $2K = K + K \not\near K$ when $K$ is unlimited, hence $\gamma(K)$ is not a monoid).

When $K$ is limited, instead, $\gamma(K)$ is just $\naturals$. We then see that the order on $\starNaturals$ can be splitted into ``blocks'': There is an initial block order isomorphic to $\naturals$, followed by blocks $\gamma(K)$ each order isomorphic to $\integers$, for every $K \in \starNaturals - \naturals$.

If $K$ is unlimited, clearly $K < 2K$ and $K \not \near 2K$, hence $\gamma(K) < \gamma(2K)$ and so there is no ``bigger''block $\gamma(K)$. The blocks $\gamma(K)$ with $K$ unlimited also do not have a ``smaller'' block: obviously $\gamma(K) = \gamma(K+1)$, and by transfer theorem we can prove that, for each unlimited $K$, one between $K$ and $K+1$ is even, so we can assume without loss of generality to work with $\gamma(K)$ with $K$ even. Moreover, if $K$ is even, $\frac{K}{2} \in \starNaturals - \naturals$, $\frac{K}{2} < K$ and $\frac{K}{2} \not \near K$, meaning $\gamma\left(\frac{K}{2}\right) < \gamma(K)$. 

Finally we prove that these blocks $\gamma(K)$ with $K$ unlimited are \emph{densely ordered}, meaning that for every $K, H$ such that $\gamma(K) < \gamma(H)$ there is some $H'$ such that $\gamma(K) < \gamma(H') < \gamma(H)$. Again, we can just assume $K, H$ to be both even and infer $\gamma(K) < \gamma\left(\frac{H+K}{2}\right) < \gamma(H)$.

All things considered, the ordering on $\starNaturals$ looks more or less like this:
\begin{figure}[h]
	\centering
	\begin{tikzpicture}[scale=3]
	\node (Nt) at (0,-0.5) {$\gamma(1) = \naturals$};
	\node (K2t) at (1,0.5) {$\gamma(\frac{K}{2})$};
	\node (Kt) at (2,-0.5) {$\gamma(K)$};
	\node (K+Ht) at (3,0.5) {$\gamma\left(\frac{K+H}{2} \right)$};
	\node (Ht) at (4,-0.5) {$\gamma(H)$};
	\node (2Ht) at (5,0.5) {$\gamma(2H)$};
	
	\draw[thick] (0, 0.3) -- (0,-0.3);
	\draw (1, 0.23) -- (1,-0.23);
	\draw (0.9, 0.18) -- (0.9,-0.18);
	\draw (0.8, 0.15) -- (0.8,-0.15);
	\draw (0.7, 0.12) -- (0.7,-0.12);
	\draw (0.6, 0.09) -- (0.6,-0.09);
	\draw (0.5, 0.06) -- (0.5,-0.06);
	\draw (0.4, 0.03) -- (0.4,-0.03);
	
	\draw (1.1, 0.18) -- (1.1,-0.18);
	\draw (1.2, 0.15) -- (1.2,-0.15);
	\draw (1.3, 0.12) -- (1.3,-0.12);
	\draw (1.4, 0.06) -- (1.4,-0.06);	
	
	\draw[thick] (2, 0.3) -- (2,-0.3);
	\draw (1.9, 0.18) -- (1.9,-0.18);
	\draw (1.8, 0.15) -- (1.8,-0.15);
	\draw (1.7, 0.12) -- (1.7,-0.12);
	\draw (1.6, 0.06) -- (1.6,-0.06);
	
	\draw (2.1, 0.18) -- (2.1,-0.18);
	\draw (2.2, 0.15) -- (2.2,-0.15);
	\draw (2.3, 0.12) -- (2.3,-0.12);
	\draw (2.4, 0.06) -- (2.4,-0.06);
	
	\draw (3, 0.23) -- (3,-0.23);	
	
	\draw (2.9, 0.18) -- (2.9,-0.18);
	\draw (2.8, 0.15) -- (2.8,-0.15);
	\draw (2.7, 0.12) -- (2.7,-0.12);
	\draw (2.6, 0.06) -- (2.6,-0.06);
	
	\draw (3.1, 0.18) -- (3.1,-0.18);
	\draw (3.2, 0.15) -- (3.2,-0.15);
	\draw (3.3, 0.12) -- (3.3,-0.12);
	\draw (3.4, 0.06) -- (3.4,-0.06);
	
	\draw[thick] (4, 0.3) -- (4,-0.3);
	\draw (3.9, 0.18) -- (3.9,-0.18);
	\draw (3.8, 0.15) -- (3.8,-0.15);
	\draw (3.7, 0.12) -- (3.7,-0.12);
	\draw (3.6, 0.06) -- (3.6,-0.06);
	
	\draw (4.1, 0.18) -- (4.1,-0.18);
	\draw (4.2, 0.15) -- (4.2,-0.15);
	\draw (4.3, 0.12) -- (4.3,-0.12);
	\draw (4.4, 0.06) -- (4.4,-0.06);
	
	\draw (5, 0.23) -- (5,-0.23);	
	\draw (4.9, 0.18) -- (4.9,-0.18);
	\draw (4.8, 0.15) -- (4.8,-0.15);
	\draw (4.7, 0.12) -- (4.7,-0.12);
	\draw (4.6, 0.06) -- (4.6,-0.06);
	
	\draw (5.1, 0.18) -- (5.1,-0.18);
	\draw (5.2, 0.15) -- (5.2,-0.15);
	\draw (5.3, 0.12) -- (5.3,-0.12);
	\draw (5.4, 0.09) -- (5.4,-0.09);
	\draw (5.5, 0.06) -- (5.5,-0.06);
	\draw (5.6, 0.03) -- (5.6,-0.03);
	\draw (4.1, 0.18) -- (4.1,-0.18);
	\draw (4.2, 0.12) -- (4.2,-0.12);
	\draw (4.3, 0.06) -- (4.3,-0.06);	

\draw [decorate,decoration={brace,amplitude=4pt}]
	(-1,-0.8) -- (1.2,-0.8) node [midway,right,xshift=.1cm] {}; 

\node (N1) at (-1,-1.6) {\tiny$0$};
\node (N2) at (-0.6,-1.6) {\tiny$1$};
\node (N3) at (-0.2,-1.6) {\tiny$2$};
\node (N4) at (0.2,-1.6) {\tiny$3$};
\node (N5) at (0.6,-1.6) {\tiny$4$};
\node (N6) at (1,-1.25) {$\dots$};

\draw[thick] (-1, -1) -- (-1,-1.5);
\draw (-0.6, -1) -- (-0.6,-1.5);
\draw (-0.2, -1) -- (-0.2,-1.5);
\draw (0.2, -1) -- (0.2,-1.5);
\draw (0.6, -1) -- (0.6,-1.5);

\draw [decorate,decoration={brace,amplitude=4pt}, xshift=4cm]
(-1.4,-0.8) -- (1.4,-0.8) node [midway,right,xshift=.1cm] {};

\node (H3) at (2.8,-1.25) {$\dots$};
\node (H3) at (3.2,-1.6) {\tiny $H-2$};
\node (H4) at (3.6,-0.9) {\tiny$H-1$};
\node (H1) at (4,-1.6) {\tiny$H$};
\node (H2) at (4.4,-0.9) {\tiny$H+1$};
\node (H6) at (4.8,-1.6) {\tiny$H+2$};
\node (H6) at (5.2,-1.25) {$\dots$};

\draw (3.2, -1) -- (3.2,-1.5);
\draw (3.6, -1) -- (3.6,-1.5);
\draw[thick] (4, -1) -- (4,-1.5);
\draw (4.4, -1) -- (4.4,-1.5);
\draw (4.8, -1) -- (4.8,-1.5);
	\end{tikzpicture}
	\caption{Ordering of $\starNaturals$.}
\end{figure}

The order structure of $\starNaturals$ highlights one of the most important things to keep in mind in nonstandard analysis: making objects like infinities and infinitesimals algebraically manageable is not free, but comes at a price, that is, the topological structure of our spaces will in general become quite scrambled up. If the transfer theorem provides a sort of ``universal recipe'' for algebraic manipulation that can be learned with ease, there is no such analogous to address the topological issues, that have to be dealt with case by case. This mainly depends on the fact that topological properties are formalized in terms of formulas where the variables are open sets. But to apply transfer we are allowed, alas, to quantify only on elements and not on subsets, making many topological properties of $\starReals$, $\starComplexs$, $\starNaturals$ and the like not transferrable.

\section{Examples}
We have now developed enough material to start (re)formalizing analytical concepts. First, we will cover some standard textbook arguments in undergrad calculus, such as limits and derivatives. Then, we will focus on more high-level properties, like the topology of the extended reals and complexes, as well as non-standard Hilbert spaces.
\subsection{Calculus}
We start with something simple: Consider a sequence of real numbers $(r_1, r_2, \dots)$. We are used to say that this sequence \emph{converges to a limit $L$} if
\begin{equation}\label{eq:standardconvergence}
	\forall \varepsilon \in \reals^+ \exists m \in \naturals: \forall n \in \naturals, (n > m \to |r_n - L| < \epsilon)
\end{equation}
This is no more and no less than the definition of convergence for sequences of reals that everyone sees in a first undergrad calculus course. Conversely, in a nonstandard setting, our idea of convergence is that the sequence $r_n$ is \emph{infinitesimally close} to $L$ when $n$ is infinite. We finally have the tools to express this formally, obtaining
\begin{equation}\label{eq:nonstandardconvergence}
	\forall N \in \starNaturals - \naturals, r_N \in \Halo{L}
\end{equation}
This is great, especially if you thing that from a statement with three quantifier we now have a statement that just needs one. Obviously, we have to prove that the two definitions are actually the same:
\begin{proposition}\label{prop:sequenceequivalence}
	For a sequence of real numbers $(r_1, r_2, \dots)$ the standard and nonstandard definitions of convergence are equivalent, that is, \eqref{eq:standardconvergence} is true if and only if \eqref{eq:nonstandardconvergence} is true.
\end{proposition}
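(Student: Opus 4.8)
The plan is to prove both implications separately, using the transfer theorem to move between the standard statement~\eqref{eq:standardconvergence} and its $\nonstd{}$-transform, which holds over $\starNaturals$ by the transfer theorem.

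For the forward direction, suppose~\eqref{eq:standardconvergence} holds. I want to show that $r_N \in \Halo{L}$ for every unlimited $N \in \starNaturals - \naturals$; by definition of halo, this means showing $|r_N - L| < \varepsilon$ for every positive \emph{real} $\varepsilon$. So fix a real $\varepsilon > 0$. By~\eqref{eq:standardconvergence} there is an $m \in \naturals$ with $\forall n \in \naturals, (n > m \to |r_n - L| < \varepsilon)$. This is a statement in the language of $\reals$ (with the fixed real constants $\varepsilon$, $L$, $m$ and the internal function $r$), so by transfer its $\nonstd{}$-transform holds: $\forall n \in \starNaturals, (n > \nonstd{m} \to |\nonstd{r}(n) - L| < \varepsilon)$. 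Now if $N$ is unlimited then $N > \nonstd{m}$ since $m \in \naturals$ and all elements of $\starNaturals - \naturals$ are positive unlimited (established in the section on the structure of the hypernaturals), so we conclude $|r_N - L| < \varepsilon$. As $\varepsilon$ was an arbitrary positive real, $r_N \infnear L$, i.e. $r_N \in \Halo{L}$.

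For the converse, suppose~\eqref{eq:nonstandardconvergence} holds and fix a real $\varepsilon > 0$. I want to produce an $m \in \naturals$ witnessing~\eqref{eq:standardconvergence}. The idea is to first exhibit such a witness in $\starNaturals$ and then transfer back. Pick any unlimited $M \in \starNaturals - \naturals$; then for every $n \in \starNaturals$ with $n > M$, $n$ is itself unlimited, so by~\eqref{eq:nonstandardconvergence} $r_n \in \Halo{L}$, hence $|\nonstd{r}(n) - L| < \varepsilon$ (since $\varepsilon$ is a positive real). Thus the statement $\exists m \in \starNaturals: \forall n \in \starNaturals, (n > m \to |\nonstd{r}(n) - L| < \varepsilon)$ is true, witnessed by $m = M$. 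This statement is the $\nonstd{}$-transform of $\exists m \in \naturals: \forall n \in \naturals, (n > m \to |r_n - L| < \varepsilon)$, which involves only the internal function $r$ and the real constants $\varepsilon, L$, so it is in transferrable form; by (the reverse direction of) the transfer theorem the standard statement holds. Since $\varepsilon > 0$ was arbitrary, \eqref{eq:standardconvergence} follows.

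The main subtlety — and the place to be careful, per the ``mantra'' — is the converse direction: one must make sure that before transferring back, the existential-over-$m$ statement really is the $\nonstd{}$-transform of a genuine statement on $\reals$, i.e. that nothing non-internal (like the unlimited witness $M$ itself, or the set $\starNaturals - \naturals$) leaks into the formula being transferred. The trick, exactly as in the boundedness example in the excerpt, is that $M$ appears only existentially, so it gets absorbed into the quantifier $\exists m$ and disappears from the transferable statement; the formula $\forall n (n > m \to |\nonstd r(n) - L| < \varepsilon)$ is manifestly the transform of $\forall n (n > m \to |r_n - L| < \varepsilon)$. The forward direction is routine once one remembers that $\Halo{L}$ is defined via \emph{real} $\varepsilon$ and that unlimited hypernaturals dominate every standard $m$.
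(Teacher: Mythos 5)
Your proof is correct and follows essentially the same route as the paper's: in the forward direction you specialise $\varepsilon$ and $m$ to constants before transferring, and in the converse you weaken the unlimited witness $M$ to an existential quantifier so the statement becomes the $\nonstd{}$-transform of a real statement and can be transferred back. The only cosmetic difference is the order in which $\varepsilon$ and $M$ are fixed in the converse direction, which is immaterial.
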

\begin{proof}
	$[std \Rightarrow nonstd]$ Fix an arbitrary $\varepsilon \in \reals^+$. Then, for some $m_\varepsilon \in \naturals$, equation \eqref{eq:standardconvergence} clearly implies 
	\begin{equation*}
		\forall n \in \naturals (n > m_\varepsilon \to |r_n - L| < \varepsilon)
	\end{equation*}
	Applying transfer then $\forall n \in \starNaturals (n > m_\varepsilon \to |r_n - L| < \varepsilon)$, and hence for every unlimited $N$ it is $|r_N - L| < \varepsilon$ since every unlimited hypernatural is by definition bigger than every natural number and hence also bigger than $m_\varepsilon$. But we already saw in the previous section that all the elements in $\starNaturals - \naturals$ are unlimited numbers, so we have 
	\begin{equation*}
		\forall N \in \starNaturals - \naturals (|r_N - L| < \varepsilon)
	\end{equation*}
	Our choice of $\varepsilon$ was arbitrary, so we conclude that for every unlimited hypernatural $N$ the inequation $|r_N - L| < \varepsilon$ holds for any $\varepsilon$, that means by definition $r_N \in \Halo{L}$. Hence $\forall N \in \starNaturals - \naturals, r_N \in \Halo{L}$, that is equation~\eqref{eq:nonstandardconvergence}. Notice how the key step here is not just applying transfer ``blindly'' to equation~\eqref{eq:standardconvergence}, but instead specialize it first such that we can interpret $\varepsilon$ and $m_\varepsilon$ as \emph{constants}. Applying transfer directly to equation~\eqref{eq:standardconvergence} would have given us $m_\varepsilon \in \starNaturals$, and hence we could not have guaranteed anymore that $N > m_\varepsilon$ for all $N \in \starNaturals - \naturals$.
	
	\medskip
	\noindent $[nonstd \Rightarrow std]$ Fix some $M \in \starNaturals - \naturals$. Since $n > M$ implies that $n$ is also unlimited and hence in $\starNaturals - \naturals$, equation~\eqref{eq:nonstandardconvergence} implies $\forall n \in \starNaturals, (n > M \to r_n \in \Halo{L})$. Now fix also an $\varepsilon \in \reals^+$. By definition, from $r_n \in \Halo{L}$ we have $| r_n - L | < \varepsilon$, and hence $\forall n \in \starNaturals, (n > M \to | r_n - L | < \varepsilon)$.
	
	This is still in a non-transferrable form because of the $M$ that gets in the way, but we can massage this formula observing that it obviously implies
	\begin{equation*}
		\exists m \in \starNaturals : \forall n \in \starNaturals, (n > m \to | r_n - L | < \varepsilon)
	\end{equation*}	
	But this is now the $\nonstd{}$-transform of a statement on $\reals$ and hence can be transferred. Finally, we get $\exists m \in \naturals : \forall n \in \naturals, (n > m \to | r_n - L | < \varepsilon)$, and since $\varepsilon$ was chosen arbitrarily in $\reals^+$ we can universally quantify $\varepsilon$ over $\reals^+$, getting equation~\eqref{eq:standardconvergence}.
\end{proof}
\subsection{Limits and continuity}
The definition of continuity in general settings is topological and requires universal quantification on open sets, making it difficult to transfer. Nevertheless, in metric spaces things are much easier, since the existence of a metric makes the usual topological definition of continuity equivalent to the following well-known first-order one (commonly known as \emph{$\delta$-$\epsilon$ definition})
\begin{equation}\label{eq:stdcontinuity}
	\forall \varepsilon \in \reals^+, \exists \delta \in \reals^+: \forall x \in \reals, (|x - c| < \delta \to |f(x) - f(c)| \leq \varepsilon)
\end{equation}
Intuitively, we say that a function $f$ is continuous at a point $c$ of its domain if ``the more $x$ gets near to $c$, the more $f(x)$ gets near to $f(c)$''. In the language of nonstandard analysis this can again be made precise, setting
\begin{equation}\label{eq:nonstdcontinuity}
	f \text{ continuous in $c$} \iffdef \forall x \in \starReals,(x \infnear c \to f(x) \infnear f(c)) \iff \forall x \in \starReals, f(\Halo{c}) \subseteq \Halo{f(c)}
\end{equation}
\begin{proposition}
	Equations~\eqref{eq:stdcontinuity} and~\eqref{eq:nonstdcontinuity} are equivalent.
\end{proposition}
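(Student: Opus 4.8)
The plan is to reuse, essentially verbatim, the two-directional template of the proof of Proposition~\ref{prop:sequenceequivalence}: continuity at $c$ is just ``convergence with a $\delta$ in place of an $m$'', and the same discipline applies --- fix the real parameters so that they become \emph{constants}, keep every formula in transferrable (internal) form at the instant transfer is invoked, and let the arithmetic of infinitesimals do the rest. Throughout, $f$ is tacitly identified with $\nonstd{f}$, and $c \in \reals$, $\varepsilon \in \reals^+$ are constants once chosen. I would also note in passing that the ``$\iff$'' appearing inside~\eqref{eq:nonstdcontinuity} is a triviality: $x \in \Halo{c} \iff x \infnear c$, and $f(\Halo{c}) \subseteq \Halo{f(c)}$ says precisely that $f$ sends each such $x$ into $\Halo{f(c)}$, so the two right-hand expressions are the same assertion unwound two ways; no separate argument is needed there.

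[\eqref{eq:stdcontinuity} $\Rightarrow$ \eqref{eq:nonstdcontinuity}] First I would fix an arbitrary $\varepsilon \in \reals^+$ and, invoking~\eqref{eq:stdcontinuity}, extract a \emph{specific} witness $\delta_\varepsilon \in \reals^+$, so that $\forall x \in \reals,\,(|x-c| < \delta_\varepsilon \to |f(x)-f(c)| \leq \varepsilon)$ holds in $\reals$ with $\delta_\varepsilon$, $\varepsilon$, $c$ all constants. Forward transfer (always safe) gives $\forall x \in \starReals,\,(|x-c| < \delta_\varepsilon \to |f(x)-f(c)| \leq \varepsilon)$. For any $x \infnear c$ the quantity $|x-c|$ is infinitesimal, hence below the positive real $\delta_\varepsilon$, so $|f(x)-f(c)| \leq \varepsilon$; since $\varepsilon$ was arbitrary in $\reals^+$, $|f(x)-f(c)|$ lies below every positive real, i.e.\ is infinitesimal, which is precisely $f(x) \infnear f(c)$, that is~\eqref{eq:nonstdcontinuity}. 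Exactly as in Proposition~\ref{prop:sequenceequivalence}, the essential point is to specialise \emph{before} transferring: transferring~\eqref{eq:stdcontinuity} directly would only produce a hyperreal $\delta$, possibly itself infinitesimal, and then $|x-c| < \delta$ could fail for $x \infnear c$.

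[\eqref{eq:nonstdcontinuity} $\Rightarrow$ \eqref{eq:stdcontinuity}] Fix $\varepsilon \in \reals^+$. By~\eqref{eq:nonstdcontinuity}, whenever $x \infnear c$ we have $f(x) \infnear f(c)$, hence $|f(x)-f(c)| \leq \varepsilon$. Now pick any positive infinitesimal $\delta_0 \in \starReals$ (for instance $[\epsilon]$ from Section~\ref{sec:construction of starreals}): for every $x \in \starReals$, $|x-c| < \delta_0$ forces $|x-c|$, and hence $x-c$, to be infinitesimal, i.e.\ $x \infnear c$, so $|f(x)-f(c)| \leq \varepsilon$. Thus $\exists \delta \in \nonstd{(\reals^+)},\,\forall x \in \starReals,\,(|x-c| < \delta \to |f(x)-f(c)| \leq \varepsilon)$ is true, witnessed by $\delta_0$. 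This is the $\nonstd{}$-transform of $\exists \delta \in \reals^+,\,\forall x \in \reals,\,(|x-c| < \delta \to |f(x)-f(c)| \leq \varepsilon)$ --- every symbol occurring ($<$, $\leq$, $|\cdot|$, $f$, the constants $c$, $\varepsilon$) is internal --- so backward transfer applies and delivers that statement over $\reals$; universally quantifying the arbitrary $\varepsilon$ over $\reals^+$ then yields~\eqref{eq:stdcontinuity}.

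I expect no deep obstacle here. As in the sequence case, the only thing demanding care is the bookkeeping that keeps the $\Leftarrow$ direction transferrable: one must realise the existential $\exists \delta$ inside $\starReals$ by a genuine infinitesimal $\delta_0$ and resist the temptation to transfer a formula still asserting that $\delta$ is \emph{infinitesimal}, which is not a property expressible over $\reals$ and hence not internal.
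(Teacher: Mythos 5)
Your proof is correct and follows essentially the same route as the paper's: in the forward direction you specialise $\varepsilon$ and $\delta_\varepsilon$ to constants before transferring and then let infinitesimal arithmetic finish; in the reverse direction you witness the existential by a positive infinitesimal $\delta_0$ and transfer the resulting internal statement back. Your quantification of $\delta$ over $\nonstd{(\reals^+)}$ rather than all of $\starReals$ is a slightly tidier bit of bookkeeping than the paper's, but the argument is the same.
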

\begin{proof}
	The proof is very similar in spirit to the one in~\ref{prop:sequenceequivalence}. To prove $[std \Rightarrow nonstd]$ just get rid of the first two quantifiers in~\eqref{eq:stdcontinuity} choosing any $\varepsilon$ and a $\delta_\epsilon$ depending on it and then transfer:
	\begin{equation*}
		\forall x \in \starReals, (|x - c| < \delta_\varepsilon \to |f(x) - f(c)| < \varepsilon)
	\end{equation*}
	Now, for all $x \in \starReals$ if $x \infnear c$ then clearly $|x - c| < \delta_\varepsilon$ and hence by the equation above $|f(x) - f(c)| \leq \varepsilon$. Since $\varepsilon$ was arbitrary then $x \infnear c$ implies $|f(x) - f(c)| < \varepsilon$ for any $\varepsilon$, hence $f(x) \infnear f(c)$.

	\medskip
	\noindent The direction $[nonstd \Rightarrow std]$ is as follows: Clearly if $\delta$ is a positive infinitesimal and $|x - c| < \delta$, then $|x - c| < \delta < r$ where $r$ is any positive real, and hence by definition $x \infnear c$. Then we get the inequation $\forall x \in \starReals, (|x - c| < \delta \to x \infnear c)$, where $\delta$ is a positive infinitesimal.
	Now pick any positive real $\varepsilon$. Clearly if $f(x) \infnear f(c)$ then $|f(x) - f(c)| < \varepsilon$.
	 Combining these last two facts with equation~\eqref{eq:nonstdcontinuity}, we get that if equation~\eqref{eq:nonstdcontinuity} is true, then also $\forall x \in \starReals, (|x - c| < \delta \to |f(x) - f(c)|<\varepsilon)$ is. $\delta$ is an infinitesimal and hence we still cannot transfer, but as usual this equation implies a weaker version in which $\delta$ gets replaced by a universal quantifier:
	\begin{equation*}
		\exists \delta \in \starReals: \forall x \in \starReals, (|x - c| < \delta \to |f(x) - f(c)|<\varepsilon)
	\end{equation*}
	And this is now transferrable. Remembering that $\varepsilon$ was an arbitrary positive real we can also add quantification of $\epsilon$ on $\reals^+$, finally obtaining equation~\ref{eq:stdcontinuity}.
\end{proof}
\begin{example}
	We now prove that the function $ x \mapsto x^2$ is continuous at a given point $c \in \reals$. Suppose $x \infnear c$, that is, $x = c + \epsilon$ for some infinitesimal $\epsilon$. Then, remembering the arithmetic rules for infinitesimals, 
	\begin{equation*}
		x^2 = (c + \epsilon)^2 = c^2 + 2c\epsilon + \epsilon^2 = c^2 + \text{ inifnitesimal } + \text{ infinitesimal} = c^2 + \text{ infinitesimal}
	\end{equation*}
	and hence $x \infnear c$ implies $x^2 \infnear c^2$. This clearly works way better than the usual definition!
\end{example}
Not surprisingly, using these techniques we can prove that the definitions given below are in fact equivalent to the usual definitions of limit for a function $f:A \to \reals$, where $A \subseteq \reals$.
\begin{definition}
	Let $f: A \to \reals$ with $A \subseteq \reals$ and let moreover be $c, L \in \reals$. We have (when present, a second line in the definition denotes the equivalent formal statement of RHS):
\begin{align*}
\lim_{x \to c} f(x) = L 
	&\iffdef \forall x \in \nonstd{A},((x \neq c \wedge x \infnear c) \to f(x) \infnear L)\\
\lim_{x \to +\infty} f(x) = L 
	&\iffdef \forall x \text{ positive unlimited in $\nonstd{A}$}, (f(x) \infnear L) \text{ (and such $x$ exists)}\\
	&\iffdef ((\nonstd{A} - A) \cap \starReals^+ \neq \emptyset) \wedge( \forall x \in (\nonstd{A} - A) \cap \starReals^+, (f(x) \infnear L))\\
\lim_{x \to c} f(x) = +\infty 
	&\iffdef \forall x \in \nonstd{A}, ((x \neq c \wedge x \infnear c) \to f(x) \text{ is positive unlimited})\\
	&\iffdef \forall x \in \nonstd{A}, ((x \neq c \wedge x \infnear c) \to f(x) \in (\nonstd{A} - A) \cap \starReals^+)
\end{align*}
\end{definition} 
Being the standard and nonstandard definitions of limit equivalent, it follows that the arithmetic of limits in the nonstandard case is absolutely identical to the standard one: limit of the sum is the sum of the limits, products of limits is limit of the products and so on.
\subsubsection{Derivatives}
The definition of derivative works exactly as it does in standard analysis: All things considered, after one defines the limit everything becomes easy.
\begin{definition}
	Given a function $f:A \to R$ and an infinitesimal $\epsilon$, we define the \emph{Newton quotient} as
	\begin{equation*}
		\mathcal{N}^f_\epsilon := \frac{f(x+\epsilon) - f(x)}{\epsilon}
	\end{equation*}
	We moreover say that \emph{$f$ has derivative $L$ at $x$} if $L$ is real and $\mathcal{N}^f_\epsilon \infnear L$ for all infinitesimals $\epsilon$.
\end{definition}
\noindent
Note that the Newton quotient, that is intuitively interpreted as the \emph{infinitesimal increment} of $f$ at $x$, depends also on $\epsilon$, since we have different infinitesimals in $\starReals$. Clearly if for some $x$ all these infinitesimal increments end up being limited and infinitesimally close one to each other, we can take $\Shadow{\mathcal{N}^f_\epsilon(x)}$ that is exactly the $L$ defined above. Using our definition of limit given previously we can moreover prove that this $L$ is indeed $\lim_{h \to 0}\frac{f(x+h) - f(x)}{h}$ and hence coincides ``usual'' derivative of $f$ at $x$ when $x$ is real, as we would expect.
\begin{example}
	This is probably the point when nonstandard analysis really shines: Derivatives are now just algebraic manipulations! Consider again the function $x \mapsto x^2$, and calculate
	\begin{equation*}
			\mathcal{N}^{x^2}_\epsilon := \frac{(x+\epsilon)^2 - x^2}{\epsilon} = \frac{x^2 + 2x\epsilon + \epsilon^2 - x^2}{\epsilon} = \frac{\epsilon(2x + \epsilon)}{\epsilon} = 2x + \epsilon
	\end{equation*}
Clearly for any $\epsilon$ it is $2x + \epsilon \in \Halo{2x}$ and hence $\Shadow{	\mathcal{N}^{x^2}_\epsilon (x)} = \Shadow{2x + \epsilon} = 2x$. This is indeed magic, we can finally calculate derivatives without having to resort to obnoxious limits!
\end{example}
Clearly all the usual textbook stuff is covered by non standard analysis (taylor series, derivatives in more variables, sequences of functions, integration, \dots), and we redirect the reader to~\cite{Goldblatt1998} if more information about these ``operative things'' is needed. Instead of pursuing this way, we will now turn our attention to other stuff, namely the nonstandard characterization of higher level structures such as nonstandard Hilbert spaces.
\section{Non-Standard separable Hilbert Spaces}

The construction of Non-standard separable Hilbert spaces goes exactly as for \reals: We can in fact apply the ultrafilter construction to any Hilbert space \SpaceH, obtaining an extension $\nonstd{\SpaceH}$. Now, do you remember all the fuss about relational structures carried out in section~\ref{sec:transfer theorem}? Well, the theory of Hilbert spaces can be formulated in terms of relational structures, and hence our transfer theorem keeps holding. We won't dive into details here (you can find a lot about non-standard Hilbert spaces in~\cite[Chapter 7, section 2]{Robinson1996}), but here is a useful fact you have to remember:

\begin{center}
	\item If $\SpaceH$ is a separable Hilbert space on \complexs (respectively, on \reals), then $\nonstd{\SpaceH}$ is an Hilbert space on \starComplexs (respectively \starReals).
\end{center}
In particular, the non-standard extension of the inner product on \SpaceH will be the inner product on $\nonstd{\SpaceH}$, so our inner product is now extended and goes from $\nonstd{\SpaceH} \times \nonstd{\SpaceH}$ to \starComplexs. Hence taking the inner product of a couple of vectors can now give an unlimited or an infinitesimal complex number as a result. Since we use the inner product to define a norm , this in  particular means that we will have vectors of unlimited or infinitesimal lenght. 
Moreover, a vector in a separable Hilbert space can be seen as a sequence of complex numbers $(v_0, v_1 \dots)$, and as usual a non-standard vector is a sequence of standard vectors (hence a sequence of sequences) quotiented by the ultrafilter relation. It is easy to see then how the vector identified by the sequence of vectors $(1,1, \dots), (\frac{1}{2}, \frac{1}{2}, \dots), (\frac{1}{3}, \frac{1}{3}, \dots)$ ends up being the vector $(\epsilon, \epsilon, \dots)$, with $\epsilon$ infinitesimal. This vector is clearly infinitesimal in any components. Similarly we can promptly became aware of the existence of unlimited vectors.

We give some useful definitions to conclude this section:
\begin{itemize}
	\item The \emph{standard vectors} are the ones of the form $\nonstd{v}$ for some $v \in \SpaceH$. These vectors \emph{DO NOT} form a Hilbert space on \starComplexs. To see this, consider the standard vector $v = (1,1,\dots)$. Since \starComplexs contains infinitesimal, I can multiply this vector by an infinitesimal $\epsilon$, obtaining $\epsilon v = (\epsilon, \epsilon, \dots)$, that clearly is not in the form $\nonstd{v}'$ for some $v' \in \SpaceH$. Nevertheless standard vectors \emph{do form} a Hilbert space on \complexs.
	
	\item The \emph{near-standard vectors} are the ones of the form $w \in \Halo{v}$, with $v$ of the form $\nonstd{v}$ for some $v \in \SpaceH$. This means that a near-standard vector is, as the name says, infinitesimally close to some standard vector. Note that, as in the real case, the halo of a non-standard vector can contain at most one standard vector. Again, these vectors are not a Hilbert space on \starComplexs. For instance, consider the vector $v = (1+\epsilon, 1+\epsilon, \dots)$, with $\epsilon$ infinitesimal. This is clearly in the halo of $(1,1,\dots)$ and hence is near-standard. As in the previous case I can pick the unlimited complex $\omega = \frac{1}{\epsilon}$ and do $\omega v = (\frac{1}{\epsilon} + 1, \frac{1}{\epsilon} + 1, \dots) = (\omega + 1, \omega + 1, \dots)$. being $\omega$ unlimited, every $\omega + 1$ is an unlimited complex number, and hence $\omega v$ is not in the halo of any standard vector. Again, near-standard vectors do instead for a Hilbert space on $\complexs$. Even better, if we quotient the set of near-standard vectors by the relation $\simeq$ defined in section~\ref{subsec:halos}, we get back our original vector space $\SpaceH$! This amounts to ``squeeze all together'' the near-standard vectors that are infinitesimally close to each other. 
\end{itemize}
\section{Conclusion}\label{sec:conclusion}

You know Kung Fu now! Use it and conquer the world!
%
%
\bibliography{biblio.bib}
\end{document}